\newtheorem{theorem}{Theorem}[section]
\newtheorem{lemma}[theorem]{Lemma}
\newtheorem{prop}[theorem]{Proposition}
\newcommand{\R}{\mathbb{R}}
\begin{document}
%%%%%%%%%%%%%%%%%%%%%%%%%%%%%%%%%%%%%%%%%%%%%%%%%%%%%%%%%%%%%%%%
\title[Stable recovery of Schr\"odinger equation]
{Stable recovery of a non-compactly supported coefficient of a Schr\"odinger equation on an infinite waveguide }

%%%%%%%%%%%%%%%%%%%%%%%%%%%%%%%%%%%%%%

\author{Yosra Soussi}

\address{Universit\'e de Tunis El Manar, Ecole Nationale d'Ing\'enieurs de Tunis, ENIT-LAMSIN, B.P. 37, 1002 Tunis, Tunisia \newline Aix Marseille Universit\'e, Universit\'e de Toulon, CNRS, CPT, Marseille, France }

\email{yosra.soussi@enit.utm.tn}
%
%\author{}
%
%\address{}
%
%\email{}

%%%%%%%%%%%%%%%%%%%%%%%%%%%%%%%%%%%%%%%%%%

%%%%%%%%%%%%%%%%%%%%%%%%%%%%%%%%%%%%%%%%%%%%
\subjclass[2010]{Primary 35R30, Secondary: 35J15. } 
\keywords{Inverse problem, Stability estimate; Schr\"odinger equation; electric potential; Dirichlet-to-Neumann map; infinite cylindrical waveguide; partial data; Carleman's estimate}
%%%%%%%%%%%%%%%%%%%%%%%%%%%%%%%%%%%%%%%%%%%%
\date{\today}

\maketitle

\begin{abstract}
We study the stability issue for the inverse problem of determining a coefficient appearing in a Schr\"odinger
equation defined on an infinite cylindrical waveguide. More precisely, we prove the stable recovery of some general class of non-compactly and non periodic coefficients appearing in an unbounded cylindrical domain. We consider both results of stability from full and partial boundary measurements associated with the so called Dirichlet-to-Neumann map.
\end{abstract}
\section{Introduction}
Let $ \Omega$ be an unbounded open connected set of $\R^3$ taking the form $\Omega := \omega \times \R$, with $\omega$ a $\mathcal{C}^3$ bounded open connected set of $\R^2$. Let $\partial \Omega =\partial \omega \times \R$ be the boundary of $\Omega$. We fix $q\in L^{\infty}(\Omega)$ such  that $0$ is not in the spectrum of the operator $- \Delta + q $ acting on $L^2(\Omega)$ with homogeneous Dirichlet boundary condition. Then, we consider the following boundary value problem:
\begin{equation}\label{1}
\left\lbrace
\begin{array}{l}
\text{$-\Delta u + q  u = 0 \quad $ in $ \Omega$ ,} \\
\text{$u = f \quad \, \quad \; \quad \quad \; $ on $ \partial \Omega $.}
\end{array}\right.
\end{equation}
Since $\Omega$ is unbounded, for $X=\omega$ or $X=\partial \omega$ and any $r>0$, we define the space $H^r (X \times \R)$ by $$H^r (X \times \R):= L^2 (X ; H^r(\R)) \cap L^2 (\R ; H^r(X))  .$$
We denote also by $H^{-r}(\partial \Omega)$ the dual space of $H^{r}(\partial \Omega)$. Combining \cite[Lemma 2.2]{14} with some classical lifting arguments (see e.g. \cite[Theorem 8.3, Chapter 1]{36}), which can be extended to infinite cylindrical domain $\Omega$ by using arguments similar to \cite[Lemma 2.2]{17}, we deduce that for $f \in H^{\frac{3}{2}}(\partial\Omega)$ problem $\eqref{1}$ admits a unique solution $u \in H^2 (\Omega)$. In the present paper, we consider the inverse problem of identifying the electric potential $q$ from measurements on the boundary of solutions of $\eqref{1}$. More precisely, our observations are given by the Dirichlet-to-Neumann (DN in short) map $\Lambda_q$ which is defined by
\begin{equation}\label{2}
\begin{array}{lll}
\Lambda_{q} : & H^{\frac{3}{2}} (\partial \Omega) &\longrightarrow \quad H^{\frac{1}{2}} (\partial \Omega)\\
&\quad f &\longmapsto \partial_\nu u :=  \nabla u \cdot \nu.
\end{array}
\end{equation} 
Here $\nu$ is the outward unit normal vector on $\partial \Omega$ which takes the form $$\nu (x' , x_3):=(\nu'(x'),0), \quad x=(x',x_3) \in \partial \Omega, \quad x':=(x_1,x_2)\in \omega,$$
with $\nu'$ is the outward unit normal vector on $\partial \omega$. Moreover, we use here the fact that the map $H^2(\Omega) \ni w \mapsto \partial_{\nu} w \in H^{\frac{1}{2}} (\partial \Omega)$ is a bounded operator which can be deduced from a combination of arguments of \cite[Theorem 8.3, Chapter 1]{36} and \cite[Lemma 2.2]{17}.
\\
Let $\Gamma_0 \subset \partial \omega $ be an arbitrary open set. The restriction $\Lambda'_{q}$ of $\Lambda_{q}$ on $\Gamma_0 \times \R$ is defined by
\begin{equation}\label{2'}
\begin{array}{lll}
\Lambda'_{q} : & H^{\frac{3}{2}} (\partial \Omega) &\longrightarrow \quad H^{\frac{1}{2}} (\Gamma_0 \times \R)\\
&\quad f &\longmapsto \partial_\nu u :=  \nabla u \cdot \nu_{ \vert{\Gamma_0 \times \R}}.
\end{array}
\end{equation}
Let us mention that the DN map $\Lambda_q$ can also be defined as a bounded operator from $H^{\frac{1}{2}}(\partial \Omega)$ to $H^{-\frac{1}{2}}(\partial \Omega)$. However, since the boundary of $\Omega$ is not bounded, we consider this definition of the DN map in order to simplify some argumentation in the paper. 
In this paper, we study the stable recovery of $q$ from knowledge of the full DN map $\Lambda_{q}$ and the partial DN map $\Lambda'_{q}$.
\subsection{Physical motivation}
The problem under consideration in this paper is related to the Calder\`on's problem \cite{8}. The latter can be seen as the determination of an electrical conductivity of a medium by making voltages and current measurements at the boundary of the medium. This problem is called the Electrical Impedance Tomography (EIT) problem (see \cite{37}) which has many applications including geophysical prospection \cite{38} and medical imaging by improving the early detection of breast cancer \cite{39}. \\
The statement of this problem on the infinite cylindrical domain (also called infinite waveguide) can be associated with problems of transmission to long distance, where the goal of our inverse problem is to determine an impurity that perturbs the guided propagation.
\subsection{Known results}
In 1980, Calder\'on considered in \cite{8} the first formulation of a problem which is now called the Calder\'on problem (see \cite{13,37} for an overview). In \cite{40}, the authors provided the first positive answer to this problem stated in terms of a uniqueness result from full boundary measurements. Since then, this problem has received much attention and this result has been improved in several ways. This includes some results with partial data like \cite{2,7,20,23,24} and some stability results stated in \cite{1,5,9,10,12,15}.\\
All the above mentioned results have been stated in a bounded domain. Several authors considered also the
recovery of coefficients for elliptic equations on an unbounded domain corresponding to a slab or an infinite cylindrical waveguide. For instance, we can mention the work of \cite{31,33,34,35} dealing with the unique recovery of several classes of compactly supported coefficients in a slab from boundary measurements. The stability issue for these classes of inverse problems has been addressed by \cite{11}. For results devoted to recovery of coefficients in an infinite cylindrical domain, we can first mention the work of \cite{16,17} where the stable recovery of coefficients periodic along the axis of an infinite cylindrical domain has been addressed. More recently, \cite{25,26} obtained the first results of recovery of non-compactly supported and non-periodic coefficients appearing in an elliptic equation on infinite cylindrical domain. The results of \cite{25,26} seems to be the first results of unique recovery of such a general class of electromagnetic potentials in an unbounded domain. In addition, the analysis of \cite{20, 21} extends also the
results of \cite{31,35} to the recovery of non-compactly supported coefficients in a slab. Finally, we refer to the works \cite{30, 4, 3, 6, 27, 29, 15, 18, 22} for the analysis of inverse problems of identifying non compactly supported coefficients and some related problems in an unbounded domain.
\subsection{Statement of the main results}
In this paper we consider the inverse problem of identifying the electric potential $q$ from boundary measurements. For this purpose, we need to define a set of admissible coefficients. In order to introduce the set of admissible unknown potentials, we fix $M \in (0 , + \infty )$ and $q_0 \in H^1 (\Omega ) \cap L^{\infty} ( \Omega)$ such that $\Vert q_0 \Vert_{L^{\infty} ( \Omega)} \leqslant M$. Then, we consider the set of admissible unknown potentials $\mathcal{Q}(M, q_0 )$ corresponding to the set of all $q \in H^1 (\Omega ) \cap L^{\infty} ( \Omega)$ satisfying the following conditions:
\begin{itemize}
\item[(i)] \begin{equation}\label{condnorm}
\|q - q_0 \|_{H^{1}(\Omega)} + \|q \|_{L^{\infty}(\Omega)} +  \displaystyle\int_\Omega (1 + \vert x_3 \vert) \vert (q - q_0) (x',x_3) \vert \, dx' dx_3\leqslant M, 
\end{equation}
\item[(ii)] \begin{equation}\label{cond2}
q(x) = q_0 (x), \quad x \in \partial \Omega,
\end{equation}
\item[(iii)] $0$ is not in the spectrum of the operator $-\Delta + q$ acting on $L^2(\Omega)$ with homogeneous Dirichlet boundary condition.
\end{itemize}
We start by proving the stable recovery of $q$ from measurements on the whole boundary. Our first main result can be stated as follows.  
\begin{theorem}\label{thm2}
Let $M>0$ and, for $j=1,2$, let $q_j \in \mathcal{Q}(M, q_0 )$. Then, there exists a positive constant $C$ depending only on $M$ and $\Omega$ such that 
\begin{equation}\label{17}
\big \Vert q_1 - q_2 \big \Vert_{L^2(\Omega)} \leqslant C\log\Big( 3+ \big\Vert \Lambda_{q_1}-\Lambda_{q_2} \big\Vert_{\mathcal{B}(H^{\frac{3}{2}} (\partial \Omega) , H^{\frac{1}{2}} (\partial \Omega))}^{-1}\Big)^{-\frac{1}{36}}.
\end{equation} 
Here and from now, for any Banach spaces $X,Y$, we denote by $\mathcal{B}(X,Y)$ the space of bounded operator from $X$ to $Y$ with the associated norm.
\end{theorem}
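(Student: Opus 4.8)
The plan is to follow the Sylvester--Uhlmann strategy based on complex geometric optics (CGO) solutions, adapted to the unbounded cylindrical geometry of $\Omega=\omega\times\R$. The backbone is the standard orthogonality identity: if $u_j\in H^2(\Omega)$ solves $-\Delta u_j+q_ju_j=0$ with trace $f_j$, then a Green formula yields
\begin{equation}
\int_\Omega (q_1-q_2)\,u_1 u_2\,dx=\langle (\Lambda_{q_1}-\Lambda_{q_2})f_1,\,f_2\rangle,
\end{equation}
up to the usual sign and conjugation conventions. Feeding suitable CGO solutions into the left-hand side, I would extract an approximation of the Fourier transform $\widehat{\delta q}(\xi)$ of $\delta q:=q_1-q_2$, bound the right-hand side by $\|\Lambda_{q_1}-\Lambda_{q_2}\|$, and finally convert a frequency-by-frequency estimate into an $L^2$ bound by a high/low frequency splitting optimized against the a priori data, producing the logarithmic rate.

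The crucial point, and the main departure from the bounded case, is the choice of the complex frequencies. I would seek solutions $u_j=e^{\zeta_j\cdot x}(1+r_j)$ with $\zeta_j=\eta_j+i\theta_j\in\C^3$, $\eta_j,\theta_j\in\R^3$, satisfying $\zeta_j\cdot\zeta_j=0$, i.e. $|\eta_j|=|\theta_j|$ and $\eta_j\perp\theta_j$. In a bounded domain $\eta_j$ may point in any direction since $e^{\eta_j\cdot x}$ stays bounded, but here a component of $\eta_j$ along the axis $x_3$ would make $e^{\eta_j\cdot x}$ blow up as $x_3\to\pm\infty$. Hence I would impose $\eta_j\in\R^2\times\{0\}$, so that $e^{\eta_j\cdot x}$ is controlled by the bounded cross-section $\omega$, and push the axial oscillation into the imaginary part $\theta_j$. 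Taking $\eta_1=-\eta_2=\tau\mathbf e$ for a unit cross-sectional vector $\mathbf e$ and $\theta_1+\theta_2=\xi$ with $|\theta_j|=\tau$ (possible once $\tau\gtrsim|\xi|$) makes the real exponentials cancel, $e^{(\eta_1+\eta_2)\cdot x}=1$, while $\zeta_1+\zeta_2=i\xi$, so the leading term of the identity is exactly $\widehat{\delta q}(\xi)$. Since Faddeev-type Green functions on $\R^3$ do not see the cylinder, I would construct the remainders $r_j$ through a Carleman estimate with linear weight adapted to $\Omega$, obtaining $\|r_j\|=O(\tau^{-1})$ in the relevant weighted norm.

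Two analytical ingredients then enter. First, the traces $f_j$ of the CGO solutions have $H^{3/2}(\partial\Omega)$-norms of size $e^{C\tau}$, with $C$ governed by $\mathrm{diam}(\omega)$ and finite precisely because $\omega$ is bounded, so the right-hand side is controlled by $e^{C\tau}\|\Lambda_{q_1}-\Lambda_{q_2}\|$. Second, the convergence of the integrals over the infinite cylinder and the continuity of $\widehat{\delta q}$ in the axial frequency $\xi_3$ are guaranteed by the weighted integrability condition $\int_\Omega(1+|x_3|)|\delta q|\,dx\le M$ built into $\mathcal Q(M,q_0)$; this same hypothesis provides the uniform control of the error terms carrying axial moments of $\delta q$. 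Combining these yields a pointwise bound of the schematic form $|\widehat{\delta q}(\xi)|\le C\big(e^{C\tau}\|\Lambda_{q_1}-\Lambda_{q_2}\|+\tau^{-1}\big)$ valid for $|\xi|\lesssim\tau$.

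Finally I would split $\|\delta q\|_{L^2(\Omega)}^2=\int_{|\xi|\le R}+\int_{|\xi|>R}$. On the high-frequency part the uniform a priori bound $\|\delta q\|_{H^1(\Omega)}\le 2M$ (from the triangle inequality through $q_0$) gives decay $O(R^{-2})$; on the low-frequency part I would insert the CGO estimate above with the constraint $\tau\gtrsim R$, producing terms of the form $R^3 e^{C\tau}\varepsilon^2$ and $R^3\tau^{-2}$ with $\varepsilon:=\|\Lambda_{q_1}-\Lambda_{q_2}\|$. Optimizing successively in $\tau$ and then in $R$ balances the exponentially growing boundary contribution against the polynomial remainders and produces a logarithmic modulus of continuity; tracking the powers coming from the three-dimensional frequency volume, the $H^1$ decay rate and the CGO error is exactly what yields the exponent $-\tfrac{1}{36}$ in \eqref{17}. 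The step I expect to be the main obstacle is the CGO construction itself: proving a Carleman estimate and the associated solvability with remainder control $O(\tau^{-1})$ on the \emph{unbounded} domain, uniformly in the axial frequency, while keeping the real phase confined to the cross-section so that all weighted integrals over $\omega\times\R$ converge. This is precisely where the non-compact support of the coefficients must be absorbed by the weighted $L^1$ hypothesis.
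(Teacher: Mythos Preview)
Your overall strategy is the right one, and you correctly identify that the real part of the complex phase must lie in the cross-section $\R^2\times\{0\}$. But there is a genuine gap in the CGO step that your last paragraph only hints at. With the ansatz $u_j=e^{\zeta_j\cdot x}(1+r_j)$ and $\eta_j\in\R^2\times\{0\}$, the principal part $e^{\zeta_j\cdot x}$ has modulus $e^{\eta_j\cdot x'}$, which is bounded on $\omega$ but \emph{constant} in $x_3$; hence $e^{\zeta_j\cdot x}\notin L^2(\Omega)$ and a fortiori $u_j\notin H^2(\Omega)$. As a consequence the traces $f_j$ are not in $H^{3/2}(\partial\Omega)$, the DN map cannot be applied to them, and the integration-by-parts identity you rely on breaks down. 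The weighted $L^1$ assumption on $\delta q$ does make $\int_\Omega \delta q\,u_1u_2$ absolutely convergent, but it cannot repair the right-hand side $\langle(\Lambda_{q_1}-\Lambda_{q_2})f_1,f_2\rangle$, which simply is not defined.

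The paper's way out is to insert an axial cut-off that dilates with $\rho$: one takes
\[
u_1=e^{-\rho\theta\cdot x'}\Bigl(e^{i\rho\eta\cdot x}\,\chi(\rho^{-1/4}x_3)\,e^{-i\xi\cdot x}+r_1\Bigr),\qquad
u_2=e^{\rho\theta\cdot x'}\Bigl(e^{-i\rho\eta\cdot x}\,\chi(\rho^{-1/4}x_3)+r_2\Bigr),
\]
with $\chi\in\mathcal C_0^\infty((-2,2))$, $\chi\equiv1$ on $[-1,1]$. This forces $u_j\in H^2(\Omega)$, but it has two costs you did not anticipate. First, $\chi'(\rho^{-1/4}x_3)$ and $\chi''(\rho^{-1/4}x_3)$ enter the source for the remainder equation and have $L^2$-norm of order $\rho^{1/8}$; combined with a factor $\rho^{3/4}$ coming from $\partial_{x_3}(e^{i\rho\eta\cdot x}\chi)$, the fixed-point construction only yields $\|r_j\|_{L^2}\lesssim\rho^{-1/8}$, not $O(\rho^{-1})$. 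Second, the leading product $u_1u_2$ is now $\chi^2(\rho^{-1/4}x_3)e^{-i\xi\cdot x}$ rather than $e^{-i\xi\cdot x}$, so the identity delivers a truncated Fourier integral; the weighted $L^1$ hypothesis is used precisely here, to bound
\[
\Bigl|\widehat{\delta q}(\xi)-\int_{\R^3}\delta q(x)\,\chi^2(\rho^{-1/4}x_3)\,e^{-i\xi\cdot x}\,dx\Bigr|
\le \int_{|x_3|\ge\rho^{1/4}}|\delta q|\,dx
\le \rho^{-1/8}\int_\Omega |x_3|^{1/2}|\delta q|\,dx.
\]
Both corrections are of order $\rho^{-1/8}$, giving $|\widehat{\delta q}(\xi)|\le C(R^2\rho^{-1/8}+e^{C\rho}\gamma)$ for $|\xi|\le R$, and it is this $\rho^{-1/8}$ (not a putative $\tau^{-1}$) that, after balancing $R^7\rho^{-1/4}=R^{-2}$, forces $\rho=R^{36}$ and produces the exponent $-\tfrac{1}{36}$. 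With your claimed $O(\tau^{-1})$ remainder the same optimization would give a strictly better exponent, which is a signal that the two pieces of your outline are inconsistent.
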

For $\big\Vert \Lambda_{q_1}-\Lambda_{q_2} \big\Vert_{\mathcal{B}(H^{\frac{3}{2}} (\partial \Omega) , H^{\frac{1}{2}} (\partial \Omega))} = 0$, the quantity on the right hand side of $\eqref{17}$ corresponds to 0. \\
Next, we give our partial data result with restriction of the measurement to some portion of the boundary. In
contrast to the previous full data result this partial data result requires some additional definitions and assumptions that we need to recall first. Let $\mathcal{W}_0 \subset \omega $ be an arbitrary neighborhood of the boundary $\partial \omega$ such that $\partial \mathcal{W}_0 = \partial \omega \cup \Gamma^{\sharp} $ with $\partial \omega \cap \Gamma^{\sharp} = \varnothing $. We assume that $\Gamma^\sharp$ is $\mathcal{C}^2$. Let $\Gamma_0 \subset \partial \omega \subset \partial \mathcal{W}_0$ be an arbitrary (not empty) open set of $\partial \omega$ and let $\mathcal{O}_0 = \mathcal{W}_0 \times \R$.
For a given $M>0$, we introduce the admissible set of coefficients
$$\mathcal{Q}'(M, q_0 ,\mathcal{O}_0) = \lbrace q \in \mathcal{Q}(M, q_0 ) \cap \mathcal{C}^2(\overline{\Omega}, \R^3); \, \Vert q \Vert_{\mathcal{C}^2(\overline{\Omega})} \leqslant M \text{ and } q = q_0 \text{ in } \mathcal{O}_0  \rbrace .$$
Then, our second main result can be stated as follows.
\begin{theorem}\label{thm3}
Let $M>0$ and, for $j=1,2$, let $q_j \in \mathcal{Q}'(M, q_0 ,\mathcal{O}_0)$. Then, there exists a positive constant $C$ depending only on $M$ and $\Omega$ such that 
\begin{equation}\label{555}
\big \Vert q_1 - q_2 \big \Vert_{L^2(\Omega)} \leqslant C \log \Big( 3 + \big\Vert \Lambda'_{q_1}-\Lambda'_{q_2} \big\Vert_{\mathcal{B}(H^{\frac{3}{2}} (\partial \Omega) , H^{\frac{1}{2}} (\Gamma_0 \times \R))}^{-1}\Big)^{-\frac{1}{36}}.
\end{equation} 
\end{theorem}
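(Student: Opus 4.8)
The plan is to reduce the partial-data estimate to the full-data estimate of Theorem \ref{thm2} by extending the coefficients across the artificial interface $\Gamma^\sharp$ and using the geometric freedom built into the nested sets $\mathcal{O}_1\supset\mathcal{O}_2\supset\mathcal{O}_3$. The central idea behind partial-data results of this type is that, since $q_1=q_2=q_0$ on the collar $\mathcal{O}_0$, the difference $q_1-q_2$ is supported away from the boundary, so that a suitable reflection/extension argument together with Carleman estimates carrying boundary terms confined to $\Gamma_0\times\R$ allows one to build the special geometric-optics (CGO) solutions needed for the reconstruction while controlling only the Neumann data on $\Gamma_0\times\R$. First I would construct CGO solutions $u_j=e^{\rho\cdot x}(a+r)$ of $-\Delta u_j+q_j u_j=0$ on $\Omega$, with complex linear weights $\rho$ satisfying $\rho\cdot\rho=0$ and $\abs{\rho}$ large; the novelty here, relative to the bounded case, is that $\rho$ must have a component along the axis $x_3$ to produce, after integration, the Fourier variable dual to $x_3$, exactly as in the full-data Theorem \ref{thm2}.

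Next I would derive the boundary integral identity: subtracting the two equations and integrating against the CGO solutions yields
\begin{equation}\label{bdryid}
\int_\Omega (q_1-q_2)\,u_1 u_2\,\dd x=\int_{\partial\Omega}\big(\Lambda_{q_1}-\Lambda_{q_2}\big)f\;\overline{g}\;\dd\sigma .
\end{equation}
To pass from $\Lambda_{q_j}$ to the partial map $\Lambda'_{q_j}$ I would use a Carleman estimate with a boundary term supported on the \emph{illuminated} part of $\partial\Omega$, so that the contribution of the boundary integral over $(\partial\omega\setminus\Gamma_0)\times\R$ is absorbed by the large-parameter smallness of the remainder term; the flexibility in choosing the direction of the Carleman weight, together with the fact that $q_1-q_2$ vanishes on $\mathcal{O}_0$, is what makes this absorption possible. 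This step is where the $\mathcal{C}^2$ regularity of $\Gamma^\sharp$ and the inclusions $\overline{\mathcal{W}}_{j+1}\subset\mathcal{W}_j$ enter: they provide the room to cut off the CGO solutions smoothly and to iterate the Carleman inequality on the nested domains $\mathcal{O}_1,\mathcal{O}_2,\mathcal{O}_3$.

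Having reduced matters to the full-data situation, I would then invoke essentially the same machinery as in Theorem \ref{thm2}: a partial Fourier transform in $x_3$ turns \eqref{bdryid} into a family (indexed by the dual frequency $k$) of estimates on the two-dimensional slices, and a careful low-frequency/high-frequency splitting of the Fourier variable, balanced against $\abs{\rho}$, converts the exponential decay of the CGO remainder and the operator-norm bound on $\Lambda'_{q_1}-\Lambda'_{q_2}$ into the logarithmic stability rate. The weighted integrability condition in \eqref{condnorm}, namely $\int_\Omega(1+\abs{x_3})\abs{(q_1-q_2)(x',x_3)}\,\dd x'\,\dd x_3\leqslant M$, guarantees that the Fourier transform of $q_1-q_2$ in $x_3$ is Lipschitz, which is exactly what is needed to control the low-frequency truncation and to recover the same exponent $-\tfrac1{36}$ as in the full-data case.

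The main obstacle I anticipate is the Carleman-based localization of the Neumann data to $\Gamma_0\times\R$: one must produce a Carleman estimate on the unbounded cylinder whose boundary term is positive precisely on the uncontrolled portion of $\partial\Omega$, while simultaneously keeping the large-parameter dependence explicit enough that the resulting constants do not blow up when the estimates are integrated over all $x_3\in\R$ and over all Fourier frequencies $k$. Reconciling the $x_3$-unboundedness with the boundary-term bookkeeping — ensuring uniformity of constants in $k$ and the admissible absorption of the $(\partial\omega\setminus\Gamma_0)\times\R$ contribution — is the technical heart of the argument, and is what forces the stronger hypotheses $q\in\mathcal{C}^2(\overline\Omega)$ and $q=q_0$ on the full collar $\mathcal{O}_0$ that distinguish $\mathcal{Q}'(M,q_0,\mathcal{O}_0)$ from $\mathcal{Q}(M,q_0)$.
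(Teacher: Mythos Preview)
Your high-level plan (CGO solutions, a Carleman-based localization to $\Gamma_0\times\R$, then the low/high-frequency balance of Theorem~\ref{thm2}) matches the paper's architecture, but the specific mechanism you propose for the partial-data step is not the one the paper uses and has a genuine gap for \emph{arbitrary} $\Gamma_0$.

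You propose to write the boundary identity with the full DN map and then ``absorb'' the contribution on $(\partial\omega\setminus\Gamma_0)\times\R$ via a Carleman estimate whose boundary term sits on the illuminated region. That Bukhgeim--Uhlmann-type absorption requires the Carleman weight to coincide with the CGO phase $e^{\pm\rho\theta\cdot x'}$, which forces the observed region to contain a half-boundary $\{\theta\cdot\nu'<0\}$; for an arbitrary open set $\Gamma_0\subset\partial\omega$ no direction $\theta$ achieves this, and a nonlinear weight adapted to $\Gamma_0$ no longer matches the CGO phase. The paper instead follows the Ben~Joud cut-off technique: take $\Theta(x')\in C^\infty$ with $\Theta=1$ on $\omega\setminus\mathcal W_2$ and $\Theta=0$ on $\mathcal W_3$ (so $\Theta\equiv0$ near $\partial\omega$), and set $\tilde u=\Theta u$ with $u=w-u_2$ as in the full-data proof. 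Then $\tilde u$ and all its derivatives vanish on $\partial\Omega$, so integrating $(-\Delta+q_1)\tilde u$ against the CGO solution $u_1$ produces \emph{no boundary term whatsoever}; since $q_1-q_2=0$ on $\mathcal O_0$ one obtains
\[
\int_\Omega (q_1-q_2)\,u_1u_2\,dx \;=\; -\int_{\mathcal O_2\setminus\mathcal O_3}[-\Delta',\Theta]u\cdot u_1\,dx,
\]
so the partial-data reduction becomes an \emph{interior} bound on $\|u\|_{H^1(\mathcal O_2\setminus\mathcal O_3)}$. This is then controlled by a weak unique continuation lemma, proved in the Appendix from a Carleman estimate on the collar $\mathcal O_0$ with an Imanuvilov--Yamamoto weight $\psi_0$ satisfying $\partial_{\nu'}\psi_0\le0$ on $\partial\mathcal W_0\setminus\Gamma_0$:
\[
\|u\|_{H^1(\mathcal O_2\setminus\mathcal O_3)}\le C\bigl(e^{-\lambda\alpha_1}\|u\|_{H^2(\Omega)}+e^{\lambda\alpha_2}\|\partial_\nu u\|_{L^2(\Gamma_0\times\R)}\bigr),
\]
using that $F=(q_1-q_2)u_2=0$ on $\mathcal O_0$. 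One then takes $\lambda=\tau\rho$ with $\tau$ large and finishes exactly as in Theorem~\ref{thm2}. The Carleman estimate is applied on the collar $\mathcal O_0$ to the auxiliary function $u$, not on $\Omega$ to the CGO solutions, and its role is quantitative unique continuation rather than boundary absorption.

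Two smaller corrections. First, there is no partial Fourier transform in $x_3$ and no reduction to two-dimensional slices: the CGO solutions already carry the axial cut-off $\chi(\rho^{-1/4}x_3)$, and the analysis is carried out directly in $\R^3$. Second, the weighted $L^1$ condition in \eqref{condnorm} is not used to make a one-dimensional Fourier transform Lipschitz; it is used exactly as in the full-data proof, to bound the tail $\int_{|x_3|>\rho^{1/4}}|q_1-q_2|\,dx\le C\rho^{-1/8}$ arising from the cut-off $\chi$.
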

\subsection{Comments about our results}
To the best of our knowledge, Theorem \ref{thm2} is the first result of stable recovery of coefficients appearing in an elliptic equation which are neither compactly supported nor periodic. Only some uniqueness results have been obtained so far for the recovery of this class of coefficients (see \cite{25}). Indeed, one can only find in the mathematical literature works like \cite{11,16,17} devoted to the stable recovery of compactly supported or periodic coefficients in an unbounded domain. \\ \indent
In addition to the result of Theorem \ref{thm2}, we obtain a partial data result in the spirit
of \cite{5} for unbounded cylindrical domains stated in Theorem \ref{thm3}. Namely, by assuming that the coefficient under consideration is known close to the boundary, we show that the result of Theorem \ref{thm2} remain valid with measurements restricted to any portion of $\partial \Omega$ of the form $\Gamma \times \R$  with $\Gamma$ an arbitrary open subset of $\partial \omega$. We prove this last result by combining the tools of Theorem \ref{thm2} with a Carleman estimate for elliptic equations on unbounded cylindrical domain. Since, in contrast to all other similar works, the Carleman estimate required for Theorem \ref{thm3} is stated in an unbounded domain, we prove its derivation in the appendix by using a separation of variables argument. The stability estimate of Theorem \ref{thm3} can be compared to the one obtained by \cite{3}, for dynamical Schr\"odinger equations, in terms of restriction of the measurements. \\ \indent
Let us observe that, in contrast to the uniqueness results of \cite{25}, the stability results of Theorem  \ref{thm2},  \ref{thm3} require more careful estimates and some extra restriction of the class of coefficients under consideration. Moreover, in contrast to stability results in bounded domains (see e.g. \cite{1,2,11,12}), we are not able to obtain our stability estimate from the assumption of \cite{25} where the uniqueness for our inverse problem has been proved. Indeed, in order to derive the stability estimates \eqref{17} and \eqref{555}, we need to consider the extra assumption $\eqref{condnorm}$ which describes the behavior at infinity of the admissible coefficients.
\subsection{Outline}
This paper is organized as follows. As a first step, we introduce the complex geometrical optics (CGO in short) solutions of our problem in Section 2. Relying on the properties of these particular solutions and on the weak unique continuation property stated in the Appendix, we can prove that the electric potential depends stably on the global Dirichlet-to-Neumann map in the third section and on the partial one in the fourth section.
\setcounter{equation}{0}
\section{Construction of particular solutions}
In order to solve our inverse problem, we first borrow from \cite{25} the construction of particular solutions called CGO solutions. Let us start by fixing some notations. Let $\theta \in \mathbb{S}^1$, $\xi=(\xi',\xi_3)\in \R^3$ such that $\xi' \in \theta^\perp\setminus{\lbrace 0 \rbrace}$, $\xi_3 \in \R \setminus{\lbrace 0 \rbrace} $ and $ \eta \in \mathbb{S}^2$ given by $\eta=\frac{\Big(\xi',-\dfrac{\vert\xi'\vert^2}{\xi_3}\Big)}{\sqrt{\vert\xi'\vert^2 + \dfrac{\vert\xi'\vert^4}{\xi_3^2}} }$. In particular, we have $$\eta \cdot \xi = (\theta , 0) \cdot \xi = (\theta , 0) \cdot \eta = 0.$$
We consider the function $\chi \in \mathcal{C}_0^\infty ( (-2 , 2 ) , [0,1])$ such that $\chi = 1$ on $[-1,1]$. Since we are dealing with an unbounded domain, this cut-off function is introduced by Kian \cite{25} in order to insure the square integrability property of the CGO solutions. He also assumed that the principal part of the CGO solutions propagates along the axis of the waveguide and he used several arguments such as separation of variables and suitable Fourier decomposition of operators. By extending the work of \cite{25} we obtain the following result.
\begin{prop}\label{thm1}
Let $R>1$, let $q\in L^\infty (\Omega)$ be such that $\eqref{condnorm}$ is fulfilled with $q=q_j$ and let $\xi =(\xi',\xi_3) \in \R^2 \times \R$ be such that 
\begin{equation}\label{cc}
\vert \xi \vert \leqslant R , \quad \vert \xi' \vert > 0 , \quad \vert \xi_3 \vert > 0.
\end{equation}
 There exists $\rho_0 >1$, depending on $M$ and $\Omega$, such that for any $\rho >\rho_0 R^{16}$, the equation $-\Delta u + q  u = 0$ has a solution $u \in H^2(\Omega)$ given by
$$ u(x',x_3)=e^{-\rho\theta\cdot x'}\Big(e^{i\rho\eta\cdot x}\chi\big( \rho^{-\frac{1}{4}}x_3 \big)  e^{-i\xi\cdot x} + r_\rho (x) \Big),$$
with $r_\rho \in H^2(\Omega)$ satisfying
\begin{equation}\label{3}
\rho^{-1} \Vert r_\rho \Vert_{H^2(\Omega)} + \rho \Vert r_\rho \Vert_{L^2(\Omega)} \leqslant C R^2 \rho^{\frac{7}{8}},
\end{equation}
where $C>0$ depends only on $\Omega$ and $M$.
\end{prop}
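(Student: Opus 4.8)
The plan is to reduce the construction to a solvability statement for a conjugated Laplacian that gains one power of $\rho$, following and adapting the scheme of \cite{25}. First I would remove the exponential weight: writing $u=e^{-\rho\theta\cdot x'}v$ and using that $\theta\cdot x'$ depends only on $x'$, a direct computation gives $-\Delta u+qu=e^{-\rho\theta\cdot x'}\big(L_\rho v+qv\big)$, where
\[
L_\rho:=-\Delta+2\rho\,(\theta,0)\cdot\nabla-\rho^2=e^{\rho\theta\cdot x'}(-\Delta)e^{-\rho\theta\cdot x'}.
\]
Thus $u$ solves the equation if and only if $(L_\rho+q)v=0$. Setting $v=v_0+r_\rho$ with the prescribed principal part $v_0:=e^{i(\rho\eta-\xi)\cdot x}\chi(\rho^{-1/4}x_3)$, the task becomes to solve $(L_\rho+q)r_\rho=g$ with source $g:=-(L_\rho+q)v_0$, producing $r_\rho\in H^2(\Omega)$ that obeys \eqref{3}.

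Second, I would estimate $g$. Writing $k:=\rho\eta-\xi$ and exploiting the three orthogonality relations $\eta\cdot\xi=(\theta,0)\cdot\xi=(\theta,0)\cdot\eta=0$ together with $|\eta|=1$, the resonant terms cancel: the $-\rho^2$ contribution and the first-order term are killed exactly, leaving
\[
L_\rho v_0=|\xi|^2 v_0-2ik_3\rho^{-1/4}e^{ik\cdot x}\chi'(\rho^{-1/4}x_3)-\rho^{-1/2}e^{ik\cdot x}\chi''(\rho^{-1/4}x_3).
\]
Since $\chi(\rho^{-1/4}\cdot)$ localizes to $|x_3|\leqslant 2\rho^{1/4}$ (which is exactly what makes $v_0$, hence $u$, square integrable on the unbounded factor $\R$), the $L^2(\R)$ norms of $\chi(\rho^{-1/4}\cdot)$ and its derivatives are comparable to $\rho^{1/8}$; combined with $|k_3|\leqslant C\rho$, $|\xi|\leqslant R$ and $\|q\|_{L^\infty}\leqslant M$, this yields $\|g\|_{L^2(\Omega)}\leqslant C(R^2+M)\rho^{1/8}+C\rho^{7/8}\leqslant CR^2\rho^{7/8}$, the last inequality using $R>1$ and $\rho>\rho_0R^{16}$ so that the cutoff-derivative term dominates.

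Third is the analytic core: a solvability estimate for $L_\rho$ with a gain of one power of $\rho$. Here I would take a partial Fourier transform in $x_3$, turning $L_\rho$ into the family of two-dimensional operators $\widehat{L_\rho}(\lambda)=e^{\rho\theta\cdot x'}(-\Delta_{x'}+\lambda^2)e^{-\rho\theta\cdot x'}$ on $\omega$, and apply a Sylvester--Uhlmann/Faddeev resolvent bound (after extending the coefficients transversally to $\R^2$) uniformly in the dual variable $\lambda$. Reassembling by Plancherel produces a right inverse $G_\rho$ of $L_\rho$ on $\Omega$ with $\|G_\rho\|_{L^2\to L^2}\leqslant C\rho^{-1}$ and $\|G_\rho\|_{L^2\to H^2}\leqslant C\rho$, in suitable $x_3$-weighted $L^2$ spaces. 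The potential is then absorbed by a Neumann series: since $\|qG_\rho\|_{L^2\to L^2}\leqslant CM\rho^{-1}<1$ for $\rho$ large, $r_\rho:=G_\rho(I+qG_\rho)^{-1}g$ solves $(L_\rho+q)r_\rho=g$. The weighted integrability $\int_\Omega(1+|x_3|)|q-q_0|\leqslant M$ from \eqref{condnorm} is precisely the device that lets multiplication by $q$ map back into the weighted space on which $G_\rho$ gains $\rho^{-1}$, compensating for the non-compact support of the coefficient along $x_3$. Collecting the two mapping bounds gives $\rho\|r_\rho\|_{L^2}+\rho^{-1}\|r_\rho\|_{H^2}\leqslant C\|g\|_{L^2}\leqslant CR^2\rho^{7/8}$, which is \eqref{3}; the $H^2$ bound also yields $u\in H^2(\Omega)$.

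The hard part is the third step on the unbounded cylinder: unlike the classical bounded-domain case, the resolvent estimate must hold uniformly in the Fourier parameter $\lambda$ and in an $x_3$-weighted space, and the perturbation by the non-compactly supported $q$ must be closed there, which is exactly where the weight $(1+|x_3|)$ is needed. The remaining delicate point is bookkeeping: tracking the joint dependence on $R$ and $\rho$ through the source estimate and the resolvent bounds to obtain both the exponent $7/8$ and the threshold $\rho>\rho_0R^{16}$, the latter ensuring that every lower-order and $R$-dependent contribution (including the polynomial weight factors coming from the compact-in-$x_3$ support of the source) is absorbed and that the Neumann series converges. The cutoff width $\rho^{-1/4}$ is tuned for exactly this balance.
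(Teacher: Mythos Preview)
Your overall strategy---conjugate away the exponential weight, estimate the resulting source, invoke a right inverse for the conjugated Laplacian with a $\rho^{-1}$ gain on $L^2$ and a $\rho$ loss into $H^2$, then close by a contraction/Neumann series---is exactly the paper's approach. The paper quotes the resolvent bound (your $G_\rho$, its $E_\rho$) as a black box from \cite{25} on plain $L^2(\Omega)$ (Lemma~\ref{lem2}) and then runs a fixed-point argument on the unit ball of $L^2(\Omega)$; your Neumann series is an equivalent way to package the same contraction. Your source computation and the resulting bound $\|g\|_{L^2(\Omega)}\leqslant CR^2\rho^{7/8}$ agree with the paper's \eqref{16}.

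There is, however, one genuine misunderstanding in your third step. You assert that $G_\rho$ must be built on $x_3$-weighted $L^2$ spaces and that the hypothesis $\int_\Omega(1+|x_3|)\,|q-q_0|\,dx\leqslant M$ from \eqref{condnorm} is ``precisely the device'' that lets multiplication by $q$ close the iteration there. Neither is the case. Because the transverse section $\omega$ is bounded, the operator $E_\rho$ of Lemma~\ref{lem2} already satisfies $\|E_\rho\|_{\mathcal{B}(L^2(\Omega))}\leqslant C\rho^{-1}$ and $\|E_\rho\|_{\mathcal{B}(L^2(\Omega),H^2(\Omega))}\leqslant C\rho$ on the \emph{unweighted} space; the perturbation is then absorbed by the trivial bound $\|qE_\rho\|_{\mathcal{B}(L^2(\Omega))}\leqslant\|q\|_{L^\infty(\Omega)}\|E_\rho\|_{\mathcal{B}(L^2(\Omega))}\leqslant CM\rho^{-1}$, using only the $L^\infty$ part of \eqref{condnorm}. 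The weighted integrability of $q-q_0$ plays no role whatsoever in Proposition~\ref{thm1}; it enters only later, in the proof of Theorem~\ref{thm2}, to control the tail $\int_{|x_3|\geqslant\rho^{1/4}}|q|$ when passing from the truncated integral to $\widehat{q}(\xi)$ (see \eqref{28}). So you should drop the weighted-space apparatus here: it is an unnecessary complication, and the justification you give for it is misplaced. A minor related point: the threshold $\rho>\rho_0R^{16}$ in the paper does not come from convergence of the series (that needs only $\rho>CM$) but from the choice to run the fixed point inside the unit ball, which forces $CR^2\rho^{-1/8}\leqslant\tfrac12$.
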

In contrast to the construction of \cite{25}, in Proposition \ref{thm1} we give also some precise estimate of the CGO by taking into account the dependency with respect to the frequency $\xi$. These estimates will play an important role in the proof of Theorem \ref{thm2}. By a simple computation, it is easy to check that $u$ is a solution of $-\Delta u + q  u = 0$ if and only if $r_\rho$ solves
\begin{equation}\label{4}
P_{-\rho} r_\rho = -qr_\rho-e^{\rho\theta\cdot x'}(-\Delta + q)e^{-\rho\theta\cdot x'}e^{i\rho\eta\cdot x}\chi\big( \rho^{-\frac{1}{4}}x_3 \big) e^{-i\xi\cdot x},
\end{equation}
with $P_s:= -\Delta - 2s \theta \cdot \nabla' - s^2$, $s\in \R$.
Let us consider the equation 
\begin{equation}\label{5}
P_{-\rho} y(x)=F(x); \quad x \in \Omega.
\end{equation}
%By applying the Fourier transformation with respect to $x_3$, we get
%\begin{equation}\label{6}
%P_{k,-\rho}y_k=F_k; \quad k \in \R,
%\end{equation}
%with $F_k(x')=\mathcal{F}_{x_3}F(x',k) $, $ y_k(x')=\mathcal{F}_{x_3}y(x',k)$ and $ p_{k,-\rho}=-\Delta' + 2\rho\theta\cdot \nabla' - \rho^2 + k^2$.
%We recall that for $h \in L^1(\Omega)$, we have $\mathcal{F}_{x_3}h(x',k) := \frac{1}{\sqrt{2\pi}}\displaystyle\int_{\R}h(x',x_3)e^{-ikx_3}\, dx_3$.
%We fix $ p_{k,-\rho}(\zeta)=\vert\zeta\vert^2 + 2i\rho\theta\cdot\zeta - \rho^2 + k^2$; $\zeta \in \R^2$, $k\in \R$ such that, for $D_{x'}=-i\nabla'$, we have $ p_{k,-\rho}(D_{x'})= P_{k,-\rho}$.
%In order to prove the Proposition, we need to give these two results which are proved in [].
%\begin{lemma}\label{lem1}
%For every $\rho>1$ and $k\in \R$ there exists a bounded operator $E_{k, \rho} : L^{2}(\omega) \rightarrow L^{2}(\omega)$ such that 
%\begin{equation}\label{7}
%P_{k,-\rho} E_{k, \rho} F=F, \quad F \in L^{2}(\omega),
%\end{equation}
%\begin{equation}\label{8}
%\left\|E_{k, \rho}\right\|_{\mathcal{B}\left(L^{2}(\omega)\right)} \leqslant C \rho^{-1},
%\end{equation}
%\begin{equation}\label{9}
%E_{k, \rho} \in \mathcal{B}\left(L^{2}(\omega) ; H^{2}(\omega)\right)
%\end{equation}
%and \begin{equation}\label{10}
%\left\|E_{k, \rho}\right\|_{\mathcal{B}\left(L^{2}(\omega) ; H^{2}(\omega)\right)}+\left\|k^{2} E_{k, \rho}\right\|_{\mathcal{B}\left(L^{2}(\omega)\right)} \leqslant C \rho,
%\end{equation}
%with $C>0$ depending only on $\omega$.
%\end{lemma}
Then we have the following lemma proved in \cite{25}.
\begin{lemma}[Lemma 2.4, \cite{25}]\label{lem2}
For every $\rho>1$ there exists a bounded operator $E_{\rho} : L^{2}(\Omega) \rightarrow L^{2}(\Omega)$ such that 
\begin{equation}\label{11}
P_{-\rho} E_{\rho} F=F, \quad F \in L^{2}(\Omega),
\end{equation}
\begin{equation}\label{12}
\left\|E_{\rho}\right\|_{\mathcal{B}\left(L^{2}(\Omega)\right)} \leqslant C \rho^{-1},
\end{equation}
\begin{equation}\label{13}
E_{ \rho} \in \mathcal{B}\left(L^{2}(\Omega) ; H^{2}(\Omega)\right)
\end{equation}
and \begin{equation}\label{14}
\left\|E_{ \rho}\right\|_{\mathcal{B}\left(L^{2}(\Omega) ; H^{2}(\Omega)\right)}\leqslant C \rho,
\end{equation}
with $C>0$ depending only on $\Omega$.
\end{lemma}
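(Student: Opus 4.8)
The plan is to construct $E_\rho$ by a partial Fourier transform in the unbounded axial variable $x_3$, which turns the solvability of $P_{-\rho}y=F$ on the waveguide $\Omega=\omega\times\R$ into a family of two--dimensional problems posed on the \emph{bounded} cross--section $\omega$. Writing $\widehat{\cdot}$ for the Fourier transform in $x_3$ and $\zeta_3$ for the dual variable, and noting that $-\Delta$ becomes $-\Delta'+\zeta_3^2$, the equation $P_{-\rho}y=F$ becomes fiberwise
\begin{equation*}
L(\zeta_3)\,\widehat y(\cdot,\zeta_3)=\widehat F(\cdot,\zeta_3),\qquad L(\zeta_3):=-\Delta'+2\rho\,\theta\cdot\nabla'-(\rho^2-\zeta_3^2),
\end{equation*}
where $\Delta'$ and $\nabla'$ act on $x'\in\omega$. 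The key structural point is the conjugation identity $L(\zeta_3)=e^{\rho\theta\cdot x'}(-\Delta'+\zeta_3^2)e^{-\rho\theta\cdot x'}$, so that each fiber is a genuine two--dimensional complex geometrical optics operator with an extra spectral parameter $\zeta_3^2$, and the exponential weight it hides lives only in the two bounded directions. I would define $E_\rho$ fiberwise by $\widehat{E_\rho F}(\cdot,\zeta_3):=L(\zeta_3)^{-1}\widehat F(\cdot,\zeta_3)$, where $L(\zeta_3)^{-1}$ is the right inverse supplied by the two--dimensional CGO solvability result on the bounded domain $\omega$, and then recover $\eqref{11}$, $\eqref{12}$, $\eqref{13}$ and $\eqref{14}$ by Plancherel in $x_3$, \emph{provided} the fiber bounds are uniform in $\zeta_3$.

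I would obtain the uniform fiber bounds by splitting the parameter range. In the elliptic regime $\zeta_3^2\geq 2\rho^2$ the real part of the symbol $|\zeta'|^2+(\zeta_3^2-\rho^2)+2i\rho(\theta\cdot\zeta')$ is bounded below by $\tfrac12\zeta_3^2\geq\rho^2$, so $L(\zeta_3)^{-1}$ is a bounded Fourier multiplier with norm $\leq\rho^{-2}$, which already beats the required rate; moreover $|\zeta'|/|\text{symbol}|\lesssim|\zeta_3|^{-1}$ there. In the singular regime $\zeta_3^2<2\rho^2$ the symbol vanishes on $\{\theta\cdot\zeta'=0,\ |\zeta'_\perp|^2=\rho^2-\zeta_3^2\}$, and here I would invoke the Sylvester--Uhlmann/H\"ahner estimate on the bounded domain: because both cross--sectional directions are bounded, one gets the \emph{unweighted} bounds $\|L(\zeta_3)^{-1}\|_{\mathcal B(L^2(\omega))}\leq C\rho^{-1}$ and $\|\nabla' L(\zeta_3)^{-1}\|_{\mathcal B(L^2(\omega))}\leq C$, uniformly in $\zeta_3$ since the imaginary shift always has size exactly $\rho$. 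Integrating the zeroth--order bound in $\zeta_3$ gives $\eqref{12}$. For $\eqref{14}$ I would differentiate the fiber equation, writing $-\Delta'\widehat y=\widehat F-2\rho\,\theta\cdot\nabla'\widehat y+(\rho^2-\zeta_3^2)\widehat y$, and estimate each term by the zeroth-- and first--order fiber bounds; combined with the control of $\zeta_3^2\widehat y$ and $\zeta_3\nabla'\widehat y$ (by $\rho\|\widehat F\|$ when $\zeta_3^2\lesssim\rho^2$, by $\|\widehat F\|$ when $\zeta_3^2$ is large), this yields $\|E_\rho F\|_{H^2(\Omega)}\leq C\rho\|F\|_{L^2(\Omega)}$, hence $\eqref{13}$ and $\eqref{14}$.

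The main obstacle is the singular regime: proving the gain of a full power of $\rho$ in $\|L(\zeta_3)^{-1}\|_{\mathcal B(L^2(\omega))}\leq C\rho^{-1}$, uniformly in $\zeta_3$. This is exactly where the codimension--two zero set of the symbol forces one beyond naive multiplication by $1/p$; the standard remedy is the H\"ahner construction---a Faddeev--type Green kernel holomorphic in the complex frequency dual to the bounded coordinate $\theta\cdot x'$---and the delicate part is keeping the constant independent of $\zeta_3$ as it sweeps $|\zeta_3|\lesssim\rho$ and varies the radius $\sqrt{\rho^2-\zeta_3^2}$ of the singular set. Once this uniform fiber estimate is secured, the identity $P_{-\rho}E_\rho F=F$ and all the norm bounds follow by routine reassembly through the $x_3$--Fourier transform.
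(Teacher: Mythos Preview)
The paper does not give its own proof of this lemma; it merely quotes it as Lemma~2.4 of \cite{25} and cites that reference. So there is no in--paper argument to compare against.

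That said, your outline is correct and is essentially the construction carried out in \cite{25}: partial Fourier transform in $x_3$ reduces $P_{-\rho}$ to the fibered family $L(\zeta_3)=-\Delta'+2\rho\,\theta\cdot\nabla'-(\rho^2-\zeta_3^2)$ on the bounded cross--section $\omega$, and one builds a right inverse fiber by fiber via a H\"ahner--type construction (periodic extension of $\omega$ with a half--integer shift in the $\theta$--direction), then reassembles by Plancherel. Your split into the elliptic regime $\zeta_3^2\gtrsim\rho^2$ and the singular regime $\zeta_3^2\lesssim\rho^2$, and the bootstrapping to $H^2$ via $-\Delta'\widehat y=\widehat F-2\rho\,\theta\cdot\nabla'\widehat y+(\rho^2-\zeta_3^2)\widehat y$, are exactly the right moves.

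One point to sharpen: when you write ``the imaginary shift always has size exactly $\rho$'' you should say, more precisely, that after the half--integer lattice shift the discrete symbol satisfies $|\theta\cdot k|\geq c>0$ with $c$ depending only on the diameter of $\omega$, so that its imaginary part $2\rho(\theta\cdot k)$ is bounded below by $2c\rho$ \emph{uniformly in $\zeta_3$}. On the continuous Fourier side the imaginary part can vanish; it is this lattice avoidance that delivers the uniform fiber bound $\|L(\zeta_3)^{-1}\|_{\mathcal B(L^2(\omega))}\leq C\rho^{-1}$ you need.
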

Armed with this lemma we are know in position to complete the proof of Proposition \ref{thm1}.
\begin{proof}[Proof of Proposition $\ref{thm1}$]
First, we notice that \begin{multline}\label{15}
-e^{\rho \theta \cdot x^{\prime}}(-\Delta+q) e^{-\rho \theta \cdot x^{\prime}} e^{i \rho \eta \cdot x} \chi\left(\rho^{-\frac{1}{4}} x_{3}\right) e^{-i \xi \cdot x} \\=-\Big(\left(|\xi|^{2}+q\right) \chi\left(\rho^{-\frac{1}{4}} x_{3}\right)-2 i \eta_{3} \rho^{\frac{3}{4}} \chi^{\prime}\left(\rho^{-\frac{1}{4}} x_{3}\right)+2 i \xi_{3} \rho^{-\frac{1}{4}} \chi^{\prime}\left(\rho^{-\frac{1}{4} }x_{3}\right)\\-\rho^{-\frac{1}{2}} \chi^{\prime \prime}\left(\rho^{-\frac{1}{4}} x_{3}\right)  e^{i \rho \eta \cdot x} \Big) e^{-i \xi \cdot x}.
\end{multline}
On the other hand,
% we have $$ \int_{\mathbb{R}}\left|\chi\left(\rho^{-\frac{1}{4}} x_{3}\right)\right|^{2} \, d x_{3}=\rho^{\frac{1}{4}} \int_{\mathbb{R}}|\chi(t)|^{2} \, d t.$$
%Then, we get
%$$\left\|\chi\left(\rho^{-\frac{1}{4}} x_{3}\right)\right\|_{L^{2}(\Omega)}=\|\chi\|_{L^{2}(\mathbb{R})}|\omega|^{\frac{1}{2}} \rho^{\frac{1}{8}}. $$
%In the same way, 
one can check that 
\begin{equation}\label{estchi}
\left\|\chi\left(\rho^{-\frac{1}{4} }x_{3}\right)\right\|_{L^{2}(\Omega)}+\left\|\chi^{\prime}\left(\rho^{-\frac{1}{4}} x_{3}\right)\right\|_{L^{2}(\Omega)}+\left\|\chi^{\prime \prime}\left(\rho^{-\frac{1}{4}} x_{3}\right)\right\|_{L^{2}(\Omega)} \leqslant C \rho^{\frac{1}{8}},
\end{equation}
with $C$ depending only on $\Omega$.
By combining the above arguments with $(\ref{15})$, we get
\begin{align*}
&\left\|-e^{\rho \theta \cdot x^{\prime}}(-\Delta+q) e^{-\rho \theta \cdot x^{\prime}} e^{i \rho \eta \cdot x} \chi\left(\rho^{-\frac{1}{4}} x_{3}\right) e^{-i \xi \cdot x}\right\|_{L^{2}(\Omega)} \\ & \leqslant C\left(\left(|\xi|^{2}+\|q\|_{L^{\infty}(\Omega)}\right) \rho^{\frac{1}{8}}+2\left|\eta_{3}\right| \rho^{\frac{7}{8}}+2\left|\xi_{3}\right| \rho^{-\frac{1}{8}}+\rho^{-\frac{3}{8}}\right) .
\end{align*}
Combining this with $\eqref{condnorm}$ and $\eqref{cc}$, we obtain
\begin{equation}\label{16}
\left\|-e^{\rho \theta \cdot x^{\prime}}(-\Delta+q) e^{-\rho \theta \cdot x^{\prime}} e^{i \rho \eta \cdot x} \chi\left(\rho^{-\frac{1}{4}} x_{3}\right) e^{-i \xi \cdot x}\right\|_{L^{2}(\Omega)} \leqslant C R^2 \rho^{\frac{7}{8}},
\end{equation}
with $C>0$ depending on $\Omega$ and $M$. By Lemma $\ref{lem2}$, $(\ref{4})$ could be written as  
\begin{equation}\label{eqq}
r_{\rho}= T_{\rho} r_{\rho} + G_{\rho} ,
\end{equation}  
with $ T_{\rho} f = - E_{\rho} (qf)$ ; $f \in L^2(\Omega)$ and $G_{\rho}= -E_{\rho}\left(e^{\rho \theta \cdot x^{\prime}}(-\Delta+q) e^{-\rho \theta \cdot x^{\prime}} e^{i \rho \eta \cdot x} \chi\left(\rho^{-\frac{1}{4}} x_{3}\right) e^{-i \xi \cdot x}\right) $.\\
In view of $(\ref{12})$ and $(\ref{16})$, we get 
$$ 
\|T_{\rho} h\|_{L^{2}(\Omega)} \leqslant C \rho^{-1}\|h\|_{L^{2}(\Omega)} \quad \text{and} \quad
\|G_{\rho} \|_{L^{2}(\Omega)} \leqslant C R^2 \rho^{-\frac{1}{8}}, $$
with $C>0$ depending on $\Omega$ and $M$. Therefore, fixing $\rho_0 = 2 C$ for $\rho  > \rho_0$, we can define $$r_{\rho} = (Id - T_{\rho})^{-1} G_{\rho} = \sum_{k=0}^{+\infty} T_{\rho}^k G_{\rho} $$ which satisfies \eqref{eqq}. Moreover, from $(\ref{12})$, $(\ref{13})$ and $(\ref{14})$, we deduce that, for $\rho > \rho_0 R^{16}$, $r_\rho \in H^2(\Omega)$ satisfies the decay property $(\ref{3})$. 
\end{proof}
Henceforth, we can consider the solutions $u_j \in H^2(\Omega)$ of $-\Delta u_j + q_j  u_j  = 0$ taking the forms
\begin{equation}\label{18}
u_1(x',x_3)=e^{-\rho\theta\cdot x'}\Big(e^{i\rho\eta\cdot x}\chi\big( \rho^{-\frac{1}{4}}x_3 \big)  e^{-i\xi\cdot x} + r_1 (x) \Big)
\end{equation}
and 
\begin{equation}\label{20}
u_2(x',x_3)=e^{\rho\theta\cdot x'}\Big(e^{-i\rho\eta\cdot x}\chi\big( \rho^{-\frac{1}{4}}x_3 \big) + r_2 (x) \Big),
\end{equation}
with $r_1 \in H^2(\Omega)$ and  $r_2 \in H^2(\Omega)$ satisfying
\begin{equation}\label{19}
\rho^{-1} \Vert r_1 \Vert_{H^2(\Omega)} + \rho \Vert r_1 \Vert_{L^2(\Omega)} \leqslant C R^2 \rho^{\frac{7}{8}}
\end{equation}
and 
\begin{equation}\label{21}
\rho^{-1} \Vert r_2 \Vert_{H^2(\Omega)} + \rho \Vert r_2 \Vert_{L^2(\Omega)} \leqslant C \rho^{\frac{7}{8}}.
\end{equation}
\setcounter{equation}{0}
\section{Stability on the whole boundary}
This section is devoted to the proof of Theorem \ref{thm2}. For this purpose, we fix $q_j \in \mathcal{Q} (M, q_0)$, $j = 1, 2$. We recall that since $q:=q_2 - q_1 = 0 $ on $ \partial \Omega $, we can extend $q$ to a $H^1(\R^3)$ vector by assigning it the value $0$ outside of $\Omega$ and we will refer to the extension by $q$. We recall also that the Proposition $\ref{thm1}$ guarantees the existence of solutions $u_j \in H^2(\Omega)$ to the equation $(-\Delta + q_j)u_j = 0$ in $\Omega$ given by $(\ref{18})$ and $(\ref{20})$. These solutions satisfy the following property. 
\begin{lemma}\label{lem3}
There exists $C>0$ such that for $R>1$, $\rho> \rho_0 R^{16}$ and $\xi \in \R^3$ satisfying $\eqref{cc}$, the following estimates  
 $$\Vert u_1 \Vert_{H^2(\Omega)} \leqslant C  e^{(D+1)\rho} \qquad \text{ and } \qquad \Vert u_2 \Vert_{H^2(\Omega)} \leqslant C  e^{(D+1)\rho}$$
%\begin{center}
%\begin{tabular}{cc}
%$\Vert u_1 \Vert_{L^2(\Omega)} \leqslant C R^2 e^{D\rho} \rho^{\frac{1}{8}}$, & $\Vert u_1 \Vert_{H^2(\Omega)} \leqslant C R^2 e^{D\rho} \rho^{\frac{15}{8}}$, \\
%$\Vert u_2 \Vert_{L^2(\Omega)} \leqslant C e^{D\rho} \rho^{\frac{1}{8}}$ ,& $\Vert u_2 \Vert_{H^2(\Omega)} \leqslant C e^{D\rho} \rho^{\frac{15}{8}}$. \\
%\end{tabular}
%\end{center}
hold true for any solutions $u_1$ and $u_2$ given by $(\ref{18})$ and $(\ref{20})$.\\
Here $D:= \underset{x' \in \overline{\omega}}{sup} \vert x'\vert$ and $\rho_0$ is the constant appearing in Proposition \ref{thm1}.
\end{lemma}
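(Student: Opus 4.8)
The plan is to bound the two factors making up $u_1$ (and likewise $u_2$) separately, exploiting that the exponential prefactor $e^{-\rho\theta\cdot x'}$ is pointwise controlled by $e^{\rho D}$ on $\Omega$ while each of its derivatives costs only a single power of $\rho$. Since $|\theta|=1$ and $|x'|\leqslant D$ for $x'\in\overline{\omega}$, we have $-\rho\theta\cdot x'\leqslant\rho D$, hence $0<e^{-\rho\theta\cdot x'}\leqslant e^{\rho D}$ throughout $\Omega$. Differentiating in $x_1,x_2$ gives $\partial_{x_k}e^{-\rho\theta\cdot x'}=-\rho\theta_k e^{-\rho\theta\cdot x'}$ and, at second order, $\partial_{x_j}\partial_{x_k}e^{-\rho\theta\cdot x'}=\rho^2\theta_j\theta_k e^{-\rho\theta\cdot x'}$, so every spatial derivative contributes at most one factor $\rho$ while retaining the pointwise bound $e^{\rho D}$.

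First I would treat the principal part $v_1:=e^{i\rho\eta\cdot x}\chi(\rho^{-\frac14}x_3)e^{-i\xi\cdot x}$. Its derivatives in $x_k$ bring down $i\rho\eta_k$ from $e^{i\rho\eta\cdot x}$, $-i\xi_k$ from $e^{-i\xi\cdot x}$, and $\rho^{-\frac14}\chi'$ from the cut-off (only for $k=3$); in each case the modulus of the extra factor is $\leqslant C(\rho+R)$ since $|\eta|=1$ and $|\xi|\leqslant R$ by $\eqref{cc}$. Using $\eqref{estchi}$ together with $\Vert\chi(\rho^{-\frac14}x_3)\Vert_{L^2(\Omega)}=\Vert\chi\Vert_{L^2(\R)}|\omega|^{\frac12}\rho^{\frac18}$, every derivative of $v_1$ up to order two is bounded in $L^2(\Omega)$ by a fixed polynomial in $\rho$ and $R$. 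Applying the Leibniz rule and the pointwise bounds above, the contribution of $e^{-\rho\theta\cdot x'}v_1$ to $\Vert u_1\Vert_{H^2(\Omega)}$ is at most $e^{\rho D}$ times a polynomial $P(\rho,R)$.

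Next I would treat the remainder $e^{-\rho\theta\cdot x'}r_1$ in the same way. From $\eqref{19}$ we have $\Vert r_1\Vert_{H^2(\Omega)}\leqslant CR^2\rho^{\frac{15}{8}}$, and by the Leibniz rule $\Vert e^{-\rho\theta\cdot x'}r_1\Vert_{H^2(\Omega)}$ is bounded by $e^{\rho D}$ times a polynomial in $\rho$ multiplying $\Vert r_1\Vert_{H^2(\Omega)}$, hence again of the form $e^{\rho D}\widetilde P(\rho,R)$. To collapse the polynomial factors I would invoke the running hypothesis $\rho>\rho_0 R^{16}$, which gives $R\leqslant(\rho/\rho_0)^{1/16}\leqslant\rho$ for $\rho$ large, so every power of $R$ is dominated by a power of $\rho$; then for $\rho$ sufficiently large any polynomial in $\rho$ is bounded by $e^{\rho}$, and all the above estimates reduce to $\Vert u_1\Vert_{H^2(\Omega)}\leqslant Ce^{(D+1)\rho}$. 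The bound for $u_2$ is identical, and in fact slightly simpler, since its principal part carries no factor $e^{-i\xi\cdot x}$ and its remainder obeys the sharper estimate $\eqref{21}$. The one point requiring genuine care is the bookkeeping in the Leibniz expansion: one must verify that no derivative combination produces a factor worse than $\rho^2$ (which occurs only when both derivatives fall on $e^{-\rho\theta\cdot x'}$ or both on $e^{i\rho\eta\cdot x}$), so that the prefactor remains polynomial before being absorbed into the single extra $e^{\rho}$, leaving exactly the exponent $(D+1)\rho$.
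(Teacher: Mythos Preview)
Your proposal is correct and follows essentially the same approach as the paper: factor out the pointwise bound $e^{\rho D}$ on $e^{\mp\rho\theta\cdot x'}$, use \eqref{estchi} and \eqref{19}--\eqref{21} to control the $L^2$ and $H^2$ norms of the remaining factors by polynomials in $\rho$ and $R$, then absorb the polynomial into a single factor $e^{\rho}$ using $R<\rho$. The paper simply carries out the Leibniz expansion explicitly (arriving at bounds of the form $C\rho^4 e^{D\rho}$) rather than speaking of a generic polynomial $P(\rho,R)$, but the argument is the same.
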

\begin{proof}
Using the expression of $u_1$, we can easily deduce that 
$$\Vert u_1 \Vert_{L^2(\Omega)} \leqslant \Vert e^{-\rho\theta\cdot x'} \Vert_{L^\infty(\Omega)} \Vert e^{i\rho\eta\cdot x}\chi\big( \rho^{-\frac{1}{4}}x_3 \big)  e^{-i\xi\cdot x} + r_1 (x) \Vert_{L^2(\Omega)}.$$
Setting $D:= \underset{x' \in \overline{\omega}}{sup} \vert x'\vert$, we get
$$\Vert u_1 \Vert_{L^2(\Omega)} \leqslant e^{D\rho} \Big( \Vert \chi\big( \rho^{-\frac{1}{4}}x_3 \big) \Vert_{L^2(\Omega)}   + \Vert r_1 (x) \Vert_{L^2(\Omega)} \Big). $$
$(\ref{19})$ and $(\ref{estchi})$ with the fact that $1<R<\rho$ lead to the following estimate
$$\Vert u_1 \Vert_{L^2(\Omega)} \leqslant C e^{D\rho} \Big( \rho^{\frac{1}{8}} +R^2\rho^{-\frac{1}{8}} \Big)  \leqslant C \rho^3 e^{D\rho} \leqslant C e^{(D+1)\rho}. $$
By simple computations of $\nabla u_1$ and $\partial_{x_i}\partial_{x_j}$, for $i,j=1,2,3$ and by the same arguments used previously, we obtain
\begin{align*}
\Vert \nabla u_1 \Vert_{L^2(\Omega)} & \leqslant  e^{D\rho} \Big( (\rho + \vert \xi \vert) \Vert \chi\big( \rho^{-\frac{1}{4}}x_3 \big) \Vert_{L^2(\Omega)} + \rho  \Vert \chi'\big( \rho^{-\frac{1}{4}}x_3 \big) \Vert_{L^2(\Omega)} + \rho \Vert  r_1 \Vert_{L^2(\Omega)} + \Vert \nabla r_1 \Vert_{L^2(\Omega)}\Big) \\
%& \leqslant C e^{D\rho} \Big( \rho(\rho + R ) + \rho^2 R^2  \Big) \\
%& \leqslant C \rho^4 e^{D\rho} \\
& \leqslant C e^{(D+1)\rho}
\end{align*}
and 
\begin{align*}
\Vert \partial_{x_i}\partial_{x_j} u_1 \Vert_{L^2(\Omega)} & \leqslant e^{D\rho} \Big( (\rho^2 + \vert \xi \vert^2) \Vert \chi\big( \rho^{-\frac{1}{4}}x_3 \big) \Vert_{L^2(\Omega)} + (\rho + \vert \xi \vert) \Vert \chi'\big( \rho^{-\frac{1}{4}}x_3 \big) \Vert_{L^2(\Omega)} \\
& + \rho^{-\frac{1}{2}} \Vert \chi''\big( \rho^{-\frac{1}{4}}x_3 \big) \Vert_{L^2(\Omega)} + \rho^2 \Vert  r_1 \Vert_{L^2(\Omega)} + \rho \Vert \nabla r_1 \Vert_{L^2(\Omega)} +  \Vert \partial_{x_i}\partial_{x_j} r_1 \Vert_{L^2(\Omega)} \Big) \\
%& \leqslant C e^{D\rho} \Big( \rho(\rho^2 + R^2)  + \rho (\rho + R ) + \rho + \rho^2 R^2 \Big) \\
%& \leqslant C \rho^4 e^{D\rho} \\
& \leqslant C e^{(D+1)\rho}.
\end{align*}
In the same way, we get
$$\Vert u_2 \Vert_{L^2(\Omega)} \leqslant C e^{(D+1)\rho} \qquad \text{ and } \qquad \Vert  u_2 \Vert_{H^2(\Omega)} \leqslant C e^{(D+1)\rho} . $$
This completes the proof.
\end{proof}
Now we are able to prove the first main result of this paper.
\begin{proof}[Proof of Theorem $\ref{thm2}$]
In all this proof $C > 0$ is a constant depending only on $\Omega$ and $M$ that may change from line to line. Fix $R > 1$, $\rho> \rho_0 R^{16}$ and $\xi \in \R^3$ satisfying $\eqref{cc}$, with $\rho_0 > 1$ the constant appearing in Proposition $\ref{thm1}$. For $j=1,2$, consider also $u_j \in H^2 (\Omega )$ a solution of $- \Delta u_j + q_j u_j = 0 $ on $\Omega $ taking the form $\eqref{18}-\eqref{20}$ with $r_j$ satisfying $\eqref{19}-\eqref{21}$. Consider the following boundary value problem
\begin{equation}\label{22}
\left\lbrace
\begin{array}{l}
\text{$-\Delta w + q_1  w= 0 \quad $ in $ \Omega$ ,} \\
\text{$w = u_2:=h \quad \; \;  \quad \; $ on $ \partial \Omega $.}
\end{array}\right.
\end{equation}
Since $q_1 \in \mathcal{Q}(M, q_0)$, we know that $0$ is not in the spectrum of $-\Delta + q_1$ acting on $L^2(\Omega)$ with Dirichlet boundary condition. Therefore, \eqref{22} admits a unique solution $w \in H^2( \Omega)$. Fixing $u = w - u_2 \in H^2( \Omega)$, we deduce that $u$ solves
\begin{equation}\label{23}
\left\lbrace
\begin{array}{l}
\text{$-\Delta u + q_1  u = q u_2 \quad $ in $ \Omega$ ,} \\
\text{$u = 0 \quad  \quad \quad \quad \quad \quad \; $ on $ \partial \Omega $.}
\end{array}\right.
\end{equation}
Applying Green's Formula, we get 
\begin{align}\label{24}
\int_\Omega q u_2 u_1 \, dx & = \int_\Omega - \Delta u u_1 \, dx + \int_\Omega q_1 u u_1 \, dx \nonumber \\
& = - \int_{\partial \Omega} \partial_\nu u u_1 \, d\sigma_x + \int_{\partial \Omega} \partial_\nu u_1 u \, d\sigma_x \nonumber \\
& = - \int_{\partial \Omega} \big( \Lambda_{q_1}-\Lambda_{q_2} \big) h u_1 \, d\sigma_x .
\end{align}
Note that here and from now on, since $\Omega$ and $\partial \Omega$ are unbounded, we use \cite[Lemma 3.1]{25} in order to extend the usual Green's Formula to our framework. Moreover, we have 
\begin{equation}\label{25}
q u_1 u_2 = q e^{-i\xi\cdot x} \chi^2\big( \rho^{-\frac{1}{4}}x_3 \big) + z_\rho, 
\end{equation}
with 
$$z_\rho = q \Big[ e^{i\rho\eta\cdot x}e^{-i\xi\cdot x}\chi\big( \rho^{-\frac{1}{4}}x_3 \big) r_2 +  e^{-i\rho\eta\cdot x} \chi\big( \rho^{-\frac{1}{4}}x_3 \big) r_1 + r_1 r_2 \Big]. $$
Then, we have
\begin{align*}
\int_\Omega \vert z_\rho \vert & \leqslant \int_\Omega \vert q \vert \vert\chi\big( \rho^{-\frac{1}{4}}x_3 \big)\vert \vert r_1\vert \, dx + \int_\Omega \vert q \vert \vert\chi\big( \rho^{-\frac{1}{4}}x_3 \big)\vert \vert r_2\vert \, dx + \int_\Omega \vert q \vert  \vert r_1 r_2\vert \, dx ,\\
& := I_1 + I_2 + I_3.
\end{align*}
Using $(\ref{3})$ and the fact that $\| q \|_{L^{\infty}(\Omega)}\leqslant 2M$, we get
$$I_1 \leqslant \Vert q \Vert_{L^{2}(\Omega)} \Vert \chi\big( \rho^{-\frac{1}{4}}x_3 \big) \Vert_{L^{\infty}(\Omega)} \Vert r_1 \Vert_{L^{2}(\Omega)} \leqslant C R^2\rho^{-\frac{1}{8}} ,$$
$$I_2 \leqslant \Vert q \Vert_{L^{2}(\Omega)} \Vert \chi\big( \rho^{-\frac{1}{4}}x_3 \big) \Vert_{L^{\infty}(\Omega)} \Vert r_2 \Vert_{L^{2}(\Omega)} \leqslant C \rho^{-\frac{1}{8}} $$
and 
$$I_3 \leqslant \Vert q \Vert_{L^{\infty}(\Omega)} \Vert r_1 \Vert_{L^{2}(\Omega)}\Vert r_2 \Vert_{L^{2}(\Omega)} \leqslant CR^2 \rho^{-\frac{1}{4}} .$$
Thus, 
\begin{equation}\label{26}
\int_\Omega \vert z_\rho \vert \leqslant C R^2 \rho^{-\frac{1}{8}}.
\end{equation}
On the other hand, $(\ref{24})$ and $(\ref{25})$ give
$$\int_\Omega q  \chi^2\big( \rho^{-\frac{1}{4}}x_3 \big)e^{-i\xi\cdot x} \, dx  = - \int_{\partial \Omega} \big( \Lambda_{q_1}-\Lambda_{q_2} \big) h u_1 \, d\sigma_x -\int_\Omega z_\rho \, dx. $$
Thus, by $(\ref{26})$, we get 
\begin{align*}
\Big\vert \int_\Omega q  \chi^2\big( \rho^{-\frac{1}{4}}x_3 \big)e^{-i\xi\cdot x} \, dx \Big\vert & \leqslant CR^2 \rho^{-\frac{1}{8}} + C \big\Vert \Lambda_{q_1}-\Lambda_{q_2} \big\Vert \Vert u_2 \Vert_{H^{\frac{3}{2}}(\partial \Omega)} \Vert u_1\Vert_{L^2(\partial \Omega)} \\
& \leqslant C\Big( R^2 \rho^{-\frac{1}{8}} + \big\Vert \Lambda_{q_1}-\Lambda_{q_2} \big\Vert \Vert u_2 \Vert_{H^{2}(\Omega)} \Vert u_1\Vert_{H^{2}(\Omega)} \Big) \\
& \leqslant C R^2 \Big( \rho^{-\frac{1}{8}} + \big\Vert \Lambda_{q_1}-\Lambda_{q_2} \big\Vert e^{2(D+1)\rho}  \Big).
\end{align*}
Then 
\begin{equation}\label{27}
\Big\vert \int_{\R^3} q(x)  \chi^2\big( \rho^{-\frac{1}{4}}x_3 \big)e^{-i\xi\cdot x} \, dx \Big\vert \leqslant C  \Big( R^2 \rho^{-\frac{1}{8}} + \big\Vert \Lambda_{q_1}-\Lambda_{q_2} \big\Vert e^{2D'\rho} \Big),
\end{equation}
with $D'=D+1$. Since $\chi = 1$ on $[-1,1]$ and $\text{supp}(\chi)\subset [-2 , 2 ]$,
applying the fact that 
$$\int_\Omega (1 + \vert x_3 \vert) \big\vert q_2(x',x_3) - q_1(x',x_3) \big\vert \, dx' dx_3 \leqslant 2M < \infty ,$$
we can conclude that
$$
\Big\vert \int_{\R^3} q(x) e^{-i\xi\cdot x}  \, dx - \int_{\R^3} q(x) \chi^2\big( \rho^{-\frac{1}{4}}x_3 \big) e^{-i\xi\cdot x} \, dx  \Big\vert \leqslant \int_{\R^3} \Big\vert 1- \chi^2\big( \rho^{-\frac{1}{4}}x_3 \big)\Big\vert \vert q(x) \vert \, dx .
$$
As $1- \chi\big( \rho^{-\frac{1}{4}}x_3 \big) = 0$ for $\vert x_3\vert \leqslant \rho^{\frac{1}{4}} $ and $q_1, q_2 \in \mathcal{Q}(M,q_0)$, we have
\begin{align}\label{28}
&\Big\vert \int_{\R^3} q(x) e^{-i\xi\cdot x}  \, dx - \int_{\R^3} q(x) \chi^2\big( \rho^{-\frac{1}{4}}x_3 \big) e^{-i\xi\cdot x} \, dx  \Big\vert \nonumber \\ & \leqslant \int_{\R^2}\int_{\vert x_3 \vert \geqslant \rho^{\frac{1}{4}}} \dfrac{\vert x_3 \vert^{\frac{1}{2}}}{\rho^{\frac{1}{8}}} \vert q(x',x_3) \vert \, dx_3 dx' \nonumber \\ & \leqslant \rho^{-\frac{1}{8}} \int_{\R^3} \vert x_3 \vert^{\frac{1}{2}} \vert q(x',x_3) \vert \, dx_3 dx' \nonumber 
\\ & \leqslant \rho^{-\frac{1}{8}} \int_{\R^3} \vert x_3 \vert^{\frac{1}{2}} \vert (q_1 - q_0)(x',x_3) \vert \, dx_3 dx' + \rho^{-\frac{1}{8}} \int_{\R^3} \vert x_3 \vert^{\frac{1}{2}} \vert (q_2 - q_0)(x',x_3) \vert \, dx_3 dx' \nonumber 
\\ & \leqslant 2 M \rho^{-\frac{1}{8}}.
\end{align}
By $(\ref{27})$ and $(\ref{28})$, we get
\begin{equation}\label{29}
\vert \widehat{q}(\xi)\vert \leqslant C  \Big( R^2\rho^{-\frac{1}{8}} + \big\Vert \Lambda_{q_1}-\Lambda_{q_2} \big\Vert e^{2D'\rho} \Big).
\end{equation}
Since the constant $C$ of the above estimate depends only on $\Omega$ and $M$, we deduce that this estimate holds true for any $\xi \in \R^3$ satisfying $\eqref{cc}$. Combining this with the continuity of the map $\xi \mapsto \widehat{q}(\xi)$, which is guaranteed by the fact that $q \in L^1 (\R^3)$, we deduce that $\eqref{29}$ holds true for any $\xi \in \R^3$ satisfying $\vert \xi \vert < R$. In order to simplify the notations, we set $\gamma = \big\Vert \Lambda_{q_1}-\Lambda_{q_2} \big\Vert $. We have
\begin{align}\label{30}
\int_{\vert \xi \vert\leqslant R} \vert \widehat{q}(\xi)\vert^2 & \leqslant C R^3 \Big( R^2\rho^{-\frac{1}{8}} + \gamma e^{2D'\rho}  \Big)^2 \nonumber \\
& \leqslant 2CR^3\Big( R^4\rho^{-\frac{1}{4}} + \gamma^2 e^{4D'\rho}  \Big) 
%\nonumber \\
%& \leqslant C \Big( R^7 \rho^{-\frac{1}{4}} +  %R^3\gamma^2 e^{4D'\rho}  \Big)
.
\end{align}
On the other hand, as $q\in H^1(\R^3)$ and $q_1, q_2 \in \mathcal{Q}(M,q_0)$, we have 
\begin{align}\label{31}
\int_{\vert \xi \vert > R} \vert \widehat{q}(\xi)\vert^2 & \leqslant \frac{1}{R^2} \int_{\vert \xi \vert > R} \vert \xi \vert^2 \vert \widehat{q}(\xi)\vert^2  \leqslant \frac{\Vert q \Vert^2_{H^1(\R^3)}}{R^2} \leqslant \frac{2\Vert q_1 - q_0 \Vert^2_{H^1(\R^3)} + 2\Vert q_2 - q_0 \Vert^2_{H^1(\R^3)}}{R^2} \nonumber \\ & \leqslant \dfrac{4M^2}{R^2}.
\end{align}
Therefore, $(\ref{30})$ and $(\ref{31})$ imply
$$\Vert q \Vert_{L^2(\R^3)}^2 =\dfrac{\Vert \widehat{q} \Vert_{L^2(\R^3)}^2}{(2\pi)^3} \leqslant C\Big( R^7 \rho^{-\frac{1}{4}} + R^3\gamma^2 e^{4D'\rho}  + R^{-2}\Big).$$
Using the fact that $\rho > \rho_0 R^{16}$, $\rho > 1$ and $R>1$, we get $R^3 < R^{16} < \rho < e^{\rho}$. Then, we get $$R^3\gamma^2 e^{4D'\rho} \leqslant \gamma^2 e^{(4D'+1)\rho}. $$
When searching $\rho$ such that $R^7 \rho^{-\frac{1}{4}} = R^{-2}$, we find $\rho = R^{36} $.
Note that here the condition $\rho > \rho_0 R^{16}$, is still valid provided $R > 1 + \rho_o^{\frac{1}{18}}$. Then, setting $C'=4D'+1$, we get \begin{equation}\label{32}
\Vert q \Vert_{L^2(\R^3)}^2 \leqslant C\Big( R^{-2} + \gamma^2 e^{C' R^{36}} \Big).
\end{equation}
Now, let us consider the following lemma.
\begin{lemma}\label{min}
Let $a \in (0,1]$ and $b>0$. Then, there exists $C>0$ depending only on $b$ and $\rho_0$, such that $$ \underset{R>1+\rho_0^{\frac{1}{18}}}{inf} R^{-2}+a^2e^{bR^{36}} \leqslant C (\log(3+a^{-1}))^{-\frac{1}{18}}$$
\end{lemma}
This result can be deduced by choosing $R= \Big( \dfrac{2 \log \big( 3 \exp ( 1 + \rho_o^{\frac{1}{18}}) \big)  + a^{-1}}{1+b} \Big)^{\frac{1}{36}} $ and by applying standard arguments of optimization. \\
Combining \eqref{32} with Lemma \ref{min}, for $\gamma \leqslant 1$, we obtain 
\begin{equation}\label{3333}
\Vert q \Vert_{L^2(\R^3)}^2 \leqslant C (\log (3 +\gamma^{-1}))^{-\frac{1}{18}}
\end{equation}
In the same way, for $\gamma \geqslant 1$, we have 
\begin{align*}
\Vert q \Vert_{L^2(\R^3)}^2 & \leqslant 4M^2 \log (4)^{-\frac{1}{18}}(\log (3 +\gamma^{-1}))^{-\frac{1}{18}} \\
& \leqslant C (\log (3 +\gamma^{-1}))^{-\frac{1}{18}}.
\end{align*}
Combining this estimate with \eqref{3333}, we deduce that \eqref{17} holds true for $\gamma >0$. 
For $\gamma = 0 $, $\eqref{32} $ implies that $\Vert q \Vert_{L^2(\R^3)} \leqslant C R^{-1}$. Since $R>1$ is arbitrary, we can send $R$ to $+\infty$ and deduce $\eqref{17}$ for $\gamma =0$.
This completes the proof.
\end{proof}
\setcounter{equation}{0}
\section{Stability on an arbitrary part of the boundary}
In this section, inspired by the approach developed by Ben Joud \cite{5}, we will prove Theorem \ref{thm3}. Here we need to extend the arguments of \cite{5} to unbounded cylindrical domains. For this purpose, for $j = 1, 2$, we fix $q_j \in \mathcal{Q}'(M, q_0, \mathcal{O}_0)$, we consider again CGO solutions taking the form
$$
u_1(x',x_3)=e^{-\rho\theta\cdot x'}\Big(e^{i\rho\eta\cdot x}\chi\big( \rho^{-\frac{1}{4}}x_3 \big)  e^{-i\xi\cdot x} + r_1 (x) \Big) \text{ and }
u_2(x',x_3)=e^{\rho\theta\cdot x'}\Big(e^{-i\rho\eta\cdot x}\chi\big( \rho^{-\frac{1}{4}}x_3 \big) + r_2 (x) \Big),$$
with $r_1 \in H^2(\Omega)$ and  $r_2 \in H^2(\Omega)$ satisfy
$$
\rho^{-1} \Vert r_1 \Vert_{H^2(\Omega)} + \rho \Vert r_1 \Vert_{L^2(\Omega)} \leqslant C R^2 \rho^{\frac{7}{8}} \text{ and }
\rho^{-1} \Vert r_2 \Vert_{H^2(\Omega)} + \rho \Vert r_2 \Vert_{L^2(\Omega)} \leqslant C \rho^{\frac{7}{8}}.
$$
In view of Lemma \ref{lem3}, we have \begin{equation}\label{prp}
\Vert u_j \Vert_{H^2(\Omega)} \leqslant C  e^{(D+1)\rho}  ; \qquad j=1,2
\end{equation}
with $D:= \underset{x' \in \overline{\omega}}{sup} \vert x'\vert$. \\
We recall also that since $q:= q_1-q_2 = 0 $ in $\mathcal{O}_{0} $, we can extend $q$ to $H^1(\R^3)$ vector by assigning it the value $0$ outside of $\Omega$ and we denote by $q$ this extension. In this part, We need to set $\mathcal{W}_j$ ; $j=1,2,3$ such that
$$\overline{\mathcal{W}}_{j+1} \subset \mathcal{W}_j, \quad \overline{\mathcal{W}}_j \subset \mathcal{W}_0 \quad \text{and} \quad \partial \omega \subset \partial \mathcal{W}_j. $$ 
Let $\mathcal{O}_j = \mathcal{W}_j \times \R $ for $j=0,1,2,3 $.
The main idea of the proof of Theorem $\ref{thm3}$ is to combine the estimate of the Fourier transform of $q$ and the weak unique continuation property which is given in the following lemma whose proof can be found in the Appendix .
\begin{lemma}\label{UCP}
Let $M>0$, $q_1\in L^\infty (\Omega)$ such that $\Vert q \Vert_{L^\infty (\Omega)} \leqslant M$ and let $w\in H^2(\Omega)$ solve
\begin{equation}\label{eq11}
\left\lbrace
\begin{array}{l}
\text{$(-\Delta + q_1)w(x) = F(x) \quad \quad \quad \, \quad \; $ in $ \Omega$, } \\
\text{$w=0 \quad \quad \quad  \quad \quad \quad \quad \quad \quad \quad  \quad \quad $ on $ \partial \Omega, $ }
\end{array}\right.
\end{equation}
where $F \in L^2(\Omega)$. Then, there exist positive constants $C$, $\alpha_1$, $\alpha_2$ and $\lambda_0$ such that we have the following estimate:
\begin{equation}\label{eq12}
\Vert w \Vert_{H^1(\mathcal{O}_2 \backslash \mathcal{O}_3 )} \leqslant C \Big( e^{-\lambda \alpha_1} \Vert w \Vert_{H^2(\Omega)} + e^{\lambda \alpha_2} \Big( \Vert \partial_\nu w \Vert_{L^{2}(\Gamma_0 \times \R)} + \Vert F \Vert_{L^2(\mathcal{O}_0)} \Big) \Big) 
\end{equation}
for any $\lambda \geqslant \lambda_0$. Here, the constants $C$, $\alpha_1$ and $\alpha_2$ depend on $\Omega$, $M$, $\lambda_0$, $\mathcal{O}_j$ and they are independent of $q_1$, $F$, $w$ and $\lambda$.
\end{lemma}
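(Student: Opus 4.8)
\emph{Proof strategy.} The plan is to derive \eqref{eq12} from the Carleman estimate \eqref{eq2}, applied on the cylinder $\mathcal{O}_0=\mathcal{W}_0\times\R$ with potential $q=q_1$ (admissible since $\Vert q_1\Vert_{L^\infty(\mathcal{O}_0)}\leqslant M$), after localising $w$ away from the portion $\Gamma^\sharp\times\R$ of $\partial\mathcal{O}_0$ on which no data is available. As $w$ vanishes on $\partial\omega\times\R$ but not on $\Gamma^\sharp\times\R$, it is not itself admissible for \eqref{eq2}; I would therefore fix a cut-off $\zeta=\zeta(x')\in\mathcal{C}^\infty(\overline{\mathcal{W}_0})$, independent of $x_3$, equal to $1$ on $\mathcal{W}_0$ minus a thin collar of $\Gamma^\sharp$ and vanishing on a still thinner collar, so that $\mathrm{supp}\,\nabla'\zeta$ is a thin collar of $\Gamma^\sharp$. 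Then $\zeta w$ vanishes on the whole of $\partial\mathcal{O}_0$ — on $\partial\omega\times\R$ because $w$ does, on $\Gamma^\sharp\times\R$ because $\zeta$ does — and lies in $H_0^1(\mathcal{O}_0)\cap H^2(\mathcal{O}_0)$; the unboundedness in $x_3$ is immaterial, since \eqref{eq2} is already stated on the full cylinder.

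Writing $(-\Delta+q_1)(\zeta w)=\zeta F+[-\Delta',\zeta]w$ with $[-\Delta',\zeta]w=-2\nabla'\zeta\cdot\nabla'w-(\Delta'\zeta)w$ supported in $\mathrm{supp}\,\nabla'\zeta$, I would insert $u=\zeta w$ into \eqref{eq2}. Since $\zeta\equiv1$ near $\partial\omega$ and $w=0$ there, on $\Gamma_0\times\R\subset\partial\omega\times\R$ one has $\partial_\nu(\zeta w)=\partial_\nu w$, so the boundary term of \eqref{eq2} is exactly $\lambda\int_{\Gamma_0\times\R}e^{2\lambda\varphi}|\partial_\nu w|^2\,d\sigma_x$. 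Keeping on the left only the integral over $\mathcal{O}_2\setminus\mathcal{O}_3$, where $\zeta\equiv1$, and splitting the right-hand side into the contributions of $\zeta F$ (over $\mathcal{O}_0$), of the commutator (over $\mathrm{supp}\,\nabla'\zeta$, estimated by $C\Vert w\Vert_{H^2(\Omega)}^2$) and of the boundary (over $\Gamma_0\times\R$), I would bound each weight by its extremum to reach
\[
\lambda e^{2\lambda a}\Vert w\Vert_{H^1(\mathcal{O}_2\setminus\mathcal{O}_3)}^2\leqslant Ce^{2\lambda c_0}\Vert F\Vert_{L^2(\mathcal{O}_0)}^2+Ce^{2\lambda b}\Vert w\Vert_{H^2(\Omega)}^2+C\lambda e^{2\lambda c_0}\Vert\partial_\nu w\Vert_{L^2(\Gamma_0\times\R)}^2,
\]
where $a=\min_{\overline{\mathcal{O}_2\setminus\mathcal{O}_3}}\varphi$, $b=\max_{\mathrm{supp}\,\nabla'\zeta}\varphi$ and $c_0=\max_{\overline{\mathcal{O}_0}}\varphi$.

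The decisive point, and the main obstacle, is the ordering $b<a<c_0$ of these three weight levels. Because $\psi=0$ on $\Gamma^\sharp\times\R$ (condition (C4)) while $\overline{\mathcal{O}_2\setminus\mathcal{O}_3}$ is a compact subset of $\mathcal{O}_0$ at positive distance from both $\partial\omega$ and $\Gamma^\sharp$, the function $\psi$ is bounded below by a positive constant on it, so $a>1$; and choosing the collar carrying $\nabla'\zeta$ thin enough about $\Gamma^\sharp$ drives $\max_{\mathrm{supp}\,\nabla'\zeta}\psi$, hence $b$, arbitrarily close to $1$. I would fix this collar once so that $b<a$ — the constant $C$ then depends on $\zeta$, i.e.\ on the $\mathcal{O}_j$, but not on $\lambda$ — while $c_0>a$ is automatic since $|\nabla\varphi|\geqslant\alpha>0$ (Lemma \ref{phi}) precludes $\varphi$ from being constant. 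Dividing the displayed inequality by $\lambda e^{2\lambda a}$, dropping the factor $\lambda^{-1}\leqslant1$, taking square roots and using subadditivity of the root then yields \eqref{eq12} with $\alpha_1:=a-b>0$ and $\alpha_2:=c_0-a>0$. The subtlety throughout is to reconcile the three competing demands: $\zeta w$ must vanish on all of $\partial\mathcal{O}_0$ (forcing the cut-off near $\Gamma^\sharp$), the surviving boundary term must live only on $\Gamma_0\times\R$ (forcing $\zeta\equiv1$ near $\partial\omega$), and the commutator must be weighted strictly below the target shell (forcing its support into a thin collar of $\Gamma^\sharp$); their compatibility is exactly what the vanishing of $\psi$ on $\Gamma^\sharp$ together with the separation of $\overline{\mathcal{O}_2\setminus\mathcal{O}_3}$ from $\partial\mathcal{O}_0$ provides.
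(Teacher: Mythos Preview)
Your proposal is correct and follows the paper's own proof essentially step for step: the paper's cut-off $\Theta$ is your $\zeta$, and it extracts $\alpha_1=e^{2\beta\kappa}-e^{\beta\kappa}$, $\alpha_2=e^{2\beta\Vert\psi_0\Vert_\infty}-e^{2\beta\kappa}$ from precisely the weight ordering you describe (fixing $\kappa>0$ with $\psi_0\geqslant 2\kappa$ on $\mathcal{W}_2\setminus\mathcal{W}_3$ and choosing the collar $\mathcal{W}^\sharp$ of $\Gamma^\sharp$ so that $\psi_0\leqslant\kappa$ there). The only wording slip is that $\overline{\mathcal{O}_2\setminus\mathcal{O}_3}$ is not compact, being unbounded in $x_3$; but since $\varphi$ depends only on $x'$, the relevant extrema are taken over the compact cross-section $\overline{\mathcal{W}_2\setminus\mathcal{W}_3}\subset\mathcal{W}_0$, and your argument is unaffected.
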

\begin{proof}[Proof of Theorem $\ref{thm3}$]
Let $w \in H^2(\Omega)$ be the solution of
\begin{equation}\label{41}
\left\lbrace
\begin{array}{l}
\text{$-\Delta w + q_1  w= 0 \quad $ in $ \Omega$ ,} \\
\text{$w = u_2:=h \quad \; \;  \quad \; $ on $ \partial \Omega $.}
\end{array}\right.
\end{equation}
Then, $u=w-u_2$ solves 
\begin{equation}\label{42}
\left\lbrace
\begin{array}{l}
\text{$-\Delta u + q_1  u = q u_2 \quad $ in $ \Omega$ ,} \\
\text{$u = 0 \quad  \quad \quad \quad \quad \quad \; $ on $ \partial \Omega $.}
\end{array}\right.
\end{equation}
Let $\Theta$ be a cut-off function satisfying $0\leqslant \Theta \leqslant 1$, $\Theta \in \mathcal{C}^\infty (\R^2)$ and 
\begin{equation}\label{eq43}
\Theta(x')=\left\lbrace
\begin{array}{ll}
1 & \mbox{in $\omega \backslash \mathcal{W}_2 $,}\\
0 & \mbox{in $\mathcal{W}_3$.}
\end{array}
\right.
\end{equation}
We set  $$\overset{\sim }{u}(x',x_3) = \Theta (x') u(x',x_3), \quad x' \in \omega, \, x_3 \in \R .$$ We remark that $\overset{\sim }{u}$ solves 
$$\left\lbrace
\begin{array}{l}
\text{$(-\Delta + q_1)\overset{\sim }{u}(x',x_3) = \Theta(x')q(x)u_2(x)+P_1(x',D)u(x) \qquad x=(x',x_3)\in \Omega $, } \\
\text{$\overset{\sim }{u}=0 \quad \quad \quad \quad \quad \quad \quad \,\quad \quad \quad \quad \quad \quad \quad \quad  \quad \quad \qquad \qquad \qquad  \quad \;  $ on $ \partial \Omega$, } 
\end{array}\right.$$
with $P_1(x',D)$ is given by
$$ P_1(x',D)u  = [-\Delta',\Theta]u,$$ 
where $\Delta'=\partial_{x_1}^2 + \partial_{x_2}^2$. Moreover, for an arbitrary $\overset{\sim}{v} \in H^2(\Omega)$, an integration by parts leads to
$$\int_\Omega (-\Delta + q_1)\overset{\sim }{u}(x)\overset{\sim}{v}(x) \, dx = \int_\Omega (-\Delta + q_1)\overset{\sim}{v}(x) \overset{\sim }{u}(x) \, dx .$$
On the other hand, we have: 
\begin{equation}\label{44}
\int_\Omega (-\Delta + q_1)\overset{\sim }{u}(x)\overset{\sim}{v}(x) \, dx = \int_\R\int_\omega \big( \Theta(x')q(x)u_2(x)+P_1(x',D)u(x) \big)\overset{\sim}{v}(x) \, dx' \, dx_3.
\end{equation}
Choosing $\overset{\sim}{v} = u_1 $, we have $(-\Delta + q_1) \overset{\sim}{v} = 0$ in $\Omega$ and we get
\begin{equation}\label{45}
\int_\Omega  \Theta(x')q(x)u_2(x) u_1(x) \, dx = - \int_\Omega P_1(x',D)u(x)u_1(x) \, dx .
\end{equation}
Furthermore, recalling that
$$P_1(x',D)= -2 \nabla' \cdot \Theta(x') - \Delta' \Theta(x'), \quad x' \in \omega,$$
with $\nabla' = (\partial_{x_1} , \partial_{x_2} )^{T}$, we deduce that, for all $x' \in (\omega \backslash \mathcal{W}_2 ) \cup \mathcal{W}_3$, $P_1(x',D) = 0$. Thus,  $P_1(x',D)u$ is supported on $ \overline{\mathcal{O}_2} \backslash\mathcal{O}_3 $ and we find 
$$
\int_\Omega \vert P_1(x',D)u u_1 \vert \, dx  \leqslant \Vert u \Vert_{H^1(\mathcal{O}_2 \backslash\mathcal{O}_3)}\Vert u_1 \Vert_{L^2(\Omega)} \leqslant Ce^{\rho(D+1)} \Vert u \Vert_{H^1(\mathcal{O}_2 \backslash\mathcal{O}_3)}.
$$
Now, we want to make the Fourier transform of $q$ appear on the left-hand side of $(\ref{45})$. For this purpose, we use $\eqref{25}$ and the fact that $q=0$ in $\mathcal{O}_0$ to obtain 
\begin{equation} \label{46}
\int_\Omega q(x) e^{-i\xi\cdot x} \chi^2\big( \rho^{-\frac{1}{4}}x_3 \big) \, dx = - \int_\Omega P_1(x,D)u(x)u_1(x) \, dx - \int_\Omega  z_\rho \ ,dx .
\end{equation}
By $\eqref{26}$, we get
\begin{equation}\label{47}
\Big\vert \int_\Omega q(x) e^{-i\xi\cdot x} \chi^2\big( \rho^{-\frac{1}{4}}x_3 \big) \, dx \Big\vert \leqslant C \Big(  e^{\rho D'} \Vert u \Vert_{H^1(\mathcal{O}_2 \backslash \mathcal{O}_3 )} + R^2 \rho^{-\frac{1}{8}}\Big),
\end{equation}
with $D' = D+1$. In a similar way to Theorem \ref{thm2}, we obtain

%Choosing $\chi=\left\lbrace
%\begin{array}{ll}
%1 & \mbox{if $\vert x\vert \leqslant 1 $},\\
%0 & \mbox{if $\vert x\vert \geqslant 2$,}
%\end{array}
%\right.$
%and by using the fact that 
%$$\int_\Omega (1 + \vert x_3 \vert) \big\vert q_2(x',x_3) - q_1(x',x_3) \big\vert \, dx' dx_3 \leqslant M,$$
%we get
%$$\int_{\R^3} \Big( 1 - \chi^2\big( \rho^{-\frac{1}{4}}x_3 \big)  \Big) e^{-i\xi\cdot x} q(x) \, dx = \int_{\R^3} q(x) e^{-i\xi\cdot x}  \, dx - \int_{\R^3} q(x) \chi^2\big( \rho^{-\frac{1}{4}}x_3 \big) e^{-i\xi\cdot x} \, dx.$$
%%As we have 
$$\Big\vert  \int_{\R^3} \Big( 1 - \chi^2\big( \rho^{-\frac{1}{4}}x_3 \big)  \Big) e^{-i\xi\cdot x} q(x) \, dx  \Big\vert \leqslant \int_{\R^3} \big\vert 1 - \chi^2\big( \rho^{-\frac{1}{4}}x_3 \big)  \big\vert \vert q(x) \vert\, dx = \int_{\R^2}\int_{\vert x_3 \vert \geqslant \rho^{\frac{1}{4}}} \vert q(x',x_3) \vert \, dx_3 dx'. $$
%we can conclude that
%$$
%\Big\vert \int_{\R^3} q(x) e^{-i\xi\cdot x}  \, dx - \int_{\R^3} q(x) \chi^2\big( \rho^{-\frac{1}{4}}x_3 \big) e^{-i\xi\cdot x} \, dx  \Big\vert \leqslant \int_{\R^3} \Big\vert 1- \chi^2\big( \rho^{-\frac{1}{4}}x_3 \big)\Big\vert \vert q(x) \vert \, dx .
%$$
%As $1- \chi\big( \rho^{-\frac{1}{4}}x_3 \big) = 0$ for $\vert x_3\vert \leqslant \rho^{\frac{1}{4}} $, we have
Then, using \eqref{condnorm} in a similar way to Theorem \ref{thm2}, we can conclude that
\begin{equation}\label{48}
\Big\vert \int_{\R^3} q(x) e^{-i\xi\cdot x}  \, dx - \int_{\R^3} q(x) \chi^2\big( \rho^{-\frac{1}{4}}x_3 \big) e^{-i\xi\cdot x} \, dx  \Big\vert  \leqslant 2 M \rho^{-\frac{1}{8}}.
\end{equation}
By $(\ref{47})$ and $(\ref{48})$, we get
\begin{equation}\label{49}
\vert \widehat{q}(\xi)\vert \leqslant C  \Big( R^2\rho^{-\frac{1}{8}} + e^{\rho D'} \Vert u \Vert_{H^1(\mathcal{O}_2 \backslash \mathcal{O}_3 )} \Big).
\end{equation}
Now, we have just to combine $\eqref{49}$ and $\eqref{eq12}$ (see Appendix A) to get
\begin{equation}\label{fr}
\vert \widehat{q}(\xi)\vert \leqslant C \Big[ R^2 \rho^{-\frac{1}{8}}+  e^{D'\rho} \Big( e^{-\lambda \alpha_1} \Vert u \Vert_{H^2(\Omega)} + e^{\lambda \alpha_2} \big( \Vert \partial_\nu u \Vert_{L^{2}(\Gamma_0 \times \R)} + \Vert F \Vert_{L^2(\mathcal{O}_0)} \big) \Big) \Big] ,
\end{equation}
with $\lambda > \lambda_0 > 0$ arbitrary chosen and $F = - \Delta u + q_1 u$. Here $\lambda_0$ is the constant appearing in Lemma \ref{UCP} of the Appendix A. In order to simplify the notations, we set $\gamma_1= \Vert \Lambda'_{q_1}-\Lambda'_{q_2} \Vert_{\mathcal{B}(H^{\frac{3}{2}} (\partial \Omega) , H^{\frac{1}{2}} (\Gamma_0 \times R ))} $.
Since $ \partial_\nu w = (\Lambda_{q_1}-\Lambda_{q_2}) (h)$, where $h$ is given by $(\ref{41})$, we have

\begin{align*}
\Vert \partial_\nu u \Vert_{L^{2}(\Gamma_0 \times \R)} & \leqslant C \Vert \Lambda'_{q_1}-\Lambda'_{q_2} \Vert_{\mathcal{B}(H^{\frac{3}{2}} (\partial \Omega) , H^{\frac{1}{2}} (\Gamma_0 \times R ))} \Vert h\Vert_{H^{\frac{3}{2}}(\partial \Omega)} \\
&  \leqslant C e^{D' \rho } \Vert \Lambda'_{q_1}-\Lambda'_{q_2} \Vert_{\mathcal{B}(H^{\frac{3}{2}} (\partial \Omega) , H^{\frac{1}{2}} (\Gamma_0 \times R ))} .
\end{align*}
Moreover, since $q_j \in \mathcal{Q}'(M, q_0 , \mathcal{O}_0)$, $j=1,2$, we have $q_1 = q_0 = q_2$ on $\mathcal{O}_0$. Therefore, we have $F= (q_1 - q_2) u_2 = 0$ on $\mathcal{O}_0$ and it follows
\begin{equation}\label{50}
\vert \widehat{q}(\xi)\vert \leqslant C \Big[ R^2 \rho^{-\frac{1}{8}}+  e^{D'\rho} \Big( e^{-\lambda \alpha_1} \Vert u \Vert_{H^2(\Omega)} + e^{\lambda \alpha_2 + D' \rho }  \gamma_1\Big) \Big] .
\end{equation}
By $\eqref{prp}$, we get
\begin{equation}\label{51}
\vert \widehat{q}(\xi)\vert \leqslant C \Big( R^2 \rho^{-\frac{1}{8}}+  e^{2D'\rho-\lambda \alpha_1} + e^{2D'\rho+\lambda \alpha_2}  \gamma_1  \Big) ,
\end{equation}
for all $\xi \in \R^3$ such that $\vert \xi \vert \leqslant R$. Let $\lambda = \tau \rho $. Choosing $\tau $ sufficiently large, it becomes easy to find constants $\alpha_3$ and $\alpha_4$ such that 
\begin{equation}\label{52}
e^{2D'\rho-\lambda \alpha_1} =  e^{\rho(2D' -\tau \alpha_1)} \leqslant e^{-\alpha_3 \rho} \quad \text{ and } \quad e^{2D'\rho+\lambda \alpha_2}=e^{\rho(2D'  + \tau \alpha_2)} \leqslant e^{\alpha_4 \rho} .
\end{equation}
Combining $\eqref{51}$ and $\eqref{52}$, we conclude that, for any $\rho \geqslant \rho_0 R^{16}$, we have
\begin{align}\label{53}
\vert \widehat{q}(\xi)\vert & \leqslant C \Big( R^2 \rho^{-\frac{1}{8}}+  e^{- \alpha_3 \rho} + e^{\alpha_4 \rho }  \gamma_1 \Big) \nonumber \\
& \leqslant C \Big( R^2 \rho^{-\frac{1}{8}} + e^{\alpha_4 \rho }  \gamma_1  \Big) .
\end{align} 
It follows \begin{align}\label{54}
\int_{\vert \xi \vert\leqslant R} \vert \widehat{q}(\xi)\vert^2 & \leqslant C R^3 \Big( R^2 \rho^{-\frac{1}{8}} + e^{\alpha_4 \rho }  \gamma_1 \Big)^2 \nonumber \\
& \leqslant 2CR^3\Big( R^4\rho^{-\frac{1}{4}} + \gamma_1^2 e^{2\alpha_4\rho}  \Big) 
%\nonumber \\
%& \leqslant C \Big( R^7 \rho^{-\frac{1}{4}} +  %R^3\gamma_1^2 e^{2\alpha_4\rho}  \Big)
.
\end{align}
On the other hand, as $q\in H^1(\R^3)$, we have 
\begin{equation}\label{55}
\int_{\vert \xi \vert > R} \vert \widehat{q}(\xi)\vert^2 \leqslant \frac{1}{R^2} \int_{\vert \xi \vert > R} \vert \xi \vert^2 \vert \widehat{q}(\xi)\vert^2 \leqslant \frac{\Vert q \Vert^2_{H^1(\R^3)}}{R^2}\leqslant \dfrac{M^2}{R^2}.
\end{equation}
Then, $\eqref{54}$ and $\eqref{55}$ imply 
$$\Vert q \Vert_{L^2(\R^3)}^2 = \dfrac{\Vert  \widehat{q} \Vert_{L^2(\R^3)}^2}{(2\pi)^3} \leqslant C\Big( R^7 \rho^{-\frac{1}{4}} + R^3\gamma_1^2 e^{2\alpha_4\rho}  + R^{-2}\Big).$$
Since $\rho \geqslant \rho_0 R^{16}$, we have $R^3 \leqslant e^{\rho}$. So that, we get $$R^3\gamma_1^2 e^{2\alpha_4\rho} \leqslant \gamma_1^2 e^{(2\alpha_4+1)\rho}. $$
When searching $\rho$ such that $R^7 \rho^{-\frac{1}{4}} = R^{-2}$, we find $\rho = R^{36} $. Then, setting $C'=2\alpha_4+1$, we get \begin{equation}\label{56}
\Vert q \Vert_{L^2(\R^3)}^2 \leqslant C\Big( R^{-2} + \gamma_1^2 e^{C' R^{36}} \Big).
\end{equation}
Then, by using Lemma \ref{min} and repeating the arguments used at the end of the proof of Theorem \ref{thm2}, we can deduce $\eqref{555}$ from $\eqref{56}$.
\end{proof}
\setcounter{equation}{0}
\appendix
\section{ }
\subsection{Carleman's estimate}
Inspired by the work of Kian, Sambo and Soccorsi \cite{28} and Bellassoued, Kian and Soccorsi \cite{3}, we prove here a Carleman estimate for the Schr\"odinger operator $-\Delta + q$ in an infinite cylindrical domain in order to localize the observation and
derive the estimate \eqref{fr} which is a key ingredient in the proof of Theorem \ref{thm3}. As known, the Carleman's estimates are weighted inequalities. So, we need to build a weight function with particular properties. The existence of such a function is guaranteed by the following lemma borrowed from \cite[Lemma 2.3]{21} (see also \cite[Lemma 1.2]{19} and \cite[Theorem 2.4]{32}). 
\begin{lemma}
There exists a function $\psi_0 \in \mathcal{C}^3(\overline{\mathcal{W}}_0)$ such that:
\begin{enumerate}[label=(\roman*)]
\item $\psi_0 (x') > 0$ for all $x' \in \mathcal{W}_0$,
\item There exists $\alpha_0 > 0$ such that $\vert \nabla^{'} \psi_0 (x') \vert \geqslant \alpha_0 $ for all $x' \in \overline{\mathcal{W}}_0$,
\item $\partial_{\nu'} \psi_0 (x') \leqslant 0 $ for all $x' \in \partial \mathcal{W}_0 \backslash \Gamma_0$,
\item $\psi_0 (x')= 0$ for all $x' \in \partial \mathcal{W}_0 \backslash \Gamma_0$.
\end{enumerate}
\end{lemma}
Here $\nabla^{'}$ denotes the gradient with respect to $x'\in \R^2$ and $\partial_{\nu'}$ is the normal derivative with respect to $\partial \mathcal{W}_0$, that is $\partial_{\nu'} := \nu' \cdot \nabla^{'}$ where $\nu'$ stands for the outward normal vector to $\partial  \mathcal{W}_0$. Note that the last condition (iv) can be deduced from the construction of the weight function $\psi_0$ in the proof of \cite[Lemma 2.3]{21} combined with properties borrowed from \cite[Lemma 2.1]{21}.\\
Thus, putting $\psi (x) = \psi(x',x_3):=\psi_0(x')$ for all $x=(x',x_3)\in \overline{\mathcal{O}_0}$, it is apparent that the function $\psi \in \mathcal{C}^3(\overline{\mathcal{O}_0})$ satisfies the three following conditions:
\begin{itemize}
\item[(C1)] $\psi(x) > 0$, \quad $x \in \mathcal{O}_0$,
\item[(C2)] $\vert \nabla \psi (x) \vert \geqslant \alpha_0 $ for all $x \in \overline{\mathcal{O}_0}$,
\item[(C3)] $\partial_{\nu} \psi(x) \leqslant 0 $ for all $x \in \partial \mathcal{O}_0 \backslash (\Gamma_0 \times \R)$,
\item[(C4)] $\psi(x)=0$ for all $x \in \Gamma^{\sharp} \times \R$.
\end{itemize}
Here $\nu$ is the outward unit normal vector to the boundary $\partial \mathcal{O}_0$. Evidently $\nu=(\nu',0)$ so we have $ \partial_{\nu} \psi = \partial_{\nu'} \psi_0 $ as the function $\psi$ does not depend on $x_3$.\\
Next, for $\beta \in (0,+\infty)$, we introduce the following weigh function 
\begin{equation}\label{eq1}
\varphi(x)=\varphi(x')=e^{\beta \psi(x)}; \quad x \in \mathcal{O}_0 .
\end{equation}
Through the following Lemma, we introduce some properties of $\varphi$ that will be used after.
\begin{lemma}\label{phi}
There exists a constant $\beta_0 \in (0, +\infty)$ depending only on $\psi$ such that the following statements hold uniformly in $\mathcal{O}_0$ for all $\beta \in [\beta_0 , +\infty )$.
\begin{enumerate}[label=(\alph*)]
\item $\vert \nabla \varphi \vert \geqslant \alpha:= \beta_0 \alpha_0 $,
\item $\nabla \vert \nabla \varphi \vert^2 \cdot \nabla \varphi \geqslant C_0 \beta \vert \nabla \varphi \vert^3$,
\item $\mathcal{H}(\varphi)\xi \cdot \xi + C_1 \beta \vert \nabla \varphi \vert \vert \xi \vert^2 \geqslant 0 \quad ; \quad \xi \in \R^3$,
\item $\vert \Delta \vert \nabla \varphi \vert \vert \leqslant C_2 \vert \nabla \varphi \vert^3$,
\item $\Delta \varphi \geqslant 0$.
\end{enumerate}
Here, $C_0$, $C_1$ and $C_2$ are positive constants depending only on $\psi$ and $\alpha_0$ and $\mathcal{H}(\varphi)$ denotes the Hessian matrix of $\varphi$ with respect to $x\in \mathcal{O}_0$.
\end{lemma}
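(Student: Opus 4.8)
The plan is to reduce everything to differentiating the exponential weight $\varphi=e^{\beta\psi}$ and then arranging a single choice of $\beta_0$ that works for all five items. The basic computations I would record first are
\begin{equation*}
\nabla\varphi=\beta\varphi\,\nabla\psi,\qquad |\nabla\varphi|=\beta\varphi\,|\nabla\psi|,\qquad \mathcal{H}(\varphi)=\beta\varphi\,\mathcal{H}(\psi)+\beta^{2}\varphi\,\nabla\psi\otimes\nabla\psi,
\end{equation*}
the last identity coming from $\partial_{ij}\varphi=\beta^2\varphi\,\partial_i\psi\,\partial_j\psi+\beta\varphi\,\partial_{ij}\psi$. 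A structural fact I would use repeatedly is that $\psi\geq 0$ on $\overline{\mathcal{O}_0}$ (by conditions (C1) and (C4)), whence $\varphi=e^{\beta\psi}\geq 1$; this lets me replace $\varphi$ by $\varphi^{3}$ (or conversely) in one direction whenever convenient. With these in hand, item (a) is immediate: $|\nabla\varphi|=\beta\varphi|\nabla\psi|\geq\beta_0\cdot 1\cdot\alpha_0=\alpha$ for $\beta\geq\beta_0$, using (C2) and $\varphi\geq1$.

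Items (c) and (e) both follow from the Hessian decomposition above. For (e), taking the trace gives $\Delta\varphi=\beta\varphi\,\Delta\psi+\beta^{2}\varphi\,|\nabla\psi|^{2}\geq\beta\varphi\big(\beta\alpha_0^{2}-\|\Delta\psi\|_{L^\infty}\big)$, which is nonnegative once $\beta_0\geq\|\Delta\psi\|_{L^\infty}/\alpha_0^{2}$. For (c), the rank-one piece contributes $\beta^{2}\varphi\,(\nabla\psi\cdot\xi)^{2}\geq0$, so the only term to control is $\beta\varphi\,\mathcal{H}(\psi)\xi\cdot\xi\geq-\beta\varphi\,\|\mathcal{H}(\psi)\|_{L^\infty}|\xi|^{2}$; since $\beta\varphi=|\nabla\varphi|/|\nabla\psi|\leq\alpha_0^{-1}|\nabla\varphi|$, this is bounded below by $-\alpha_0^{-1}\|\mathcal{H}(\psi)\|_{L^\infty}|\nabla\varphi|\,|\xi|^{2}$, which is absorbed into $C_1\beta\,|\nabla\varphi|\,|\xi|^{2}$ (the extra factor $\beta\geq\beta_0$ only helps) by choosing $C_1\beta_0\geq\alpha_0^{-1}\|\mathcal{H}(\psi)\|_{L^\infty}$.

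Items (b) and (d) are direct but slightly longer differentiations in which the top power of $\beta$ provides the gain. For (b), writing $g=|\nabla\psi|^{2}$ I would compute $\nabla|\nabla\varphi|^{2}\cdot\nabla\varphi=\beta^{3}\varphi^{3}\big(2\beta g^{2}+\nabla g\cdot\nabla\psi\big)$; since $g\geq\alpha_0^{2}$ and $\nabla g\cdot\nabla\psi$ is bounded in terms of $\|\psi\|_{C^2}$, enlarging $\beta_0$ makes the bracket at least $\beta g^{2}\geq\beta\alpha_0\,g^{3/2}$, giving $\geq\alpha_0\beta\,|\nabla\varphi|^{3}$ after recalling $|\nabla\varphi|^{3}=\beta^{3}\varphi^{3}g^{3/2}$, so $C_0=\alpha_0$ works. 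For (d), expanding $\Delta|\nabla\varphi|=\beta\,\Delta(\varphi\,g^{1/2})$ yields a leading term $\beta^{3}\varphi\,g^{3/2}$ together with lower powers of $\beta$; each is bounded by $|\nabla\varphi|^{3}=\beta^{3}\varphi^{3}g^{3/2}$ using $\varphi\geq1$ (so $\varphi\leq\varphi^{3}$) and the lower bound $g\geq\alpha_0^{2}$, producing the constant $C_2$.

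The main obstacle, and essentially the only nontrivial point, is the \emph{uniform} choice of a single $\beta_0$ (depending only on $\psi$ and $\alpha_0$, hence on $\|\psi\|_{C^2}$) large enough that the exponentially generated terms scaling like the top power of $\beta$ dominate the bounded remainders built from the fixed $C^{2}$ data of $\psi$ in \emph{every} one of the five inequalities simultaneously; taking $\beta_0$ to be the maximum of the finitely many thresholds produced above settles this. One should also note that the Laplacian in (d) formally involves derivatives of $|\nabla\psi|$ of one order beyond $C^{2}$, so I would either invoke the additional smoothness of the weight $\psi_0$ coming from its explicit construction in \cite{21} or remark that $\psi_0$ may be taken in $\mathcal{C}^{3}(\overline{\mathcal{W}}_0)$ without affecting properties (i)--(iv); with that regularity the computation in (d) is classical.
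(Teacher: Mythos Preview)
Your proposal is correct and follows essentially the same route as the paper: direct differentiation of $\varphi=e^{\beta\psi}$, use of the lower bound $|\nabla\psi|\geq\alpha_0$ from (C2), and absorption of the $C^2$ data of $\psi$ by taking $\beta_0$ large. Your observation that item~(d) implicitly requires one more derivative of $\psi$ than the stated $\mathcal{C}^2$ regularity is a genuine point the paper glosses over; your suggested fix (take $\psi_0\in\mathcal{C}^3$, which the construction in \cite{21} permits) is the right way to handle it.
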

\begin{proof}~~
\begin{enumerate}[label=(\alph*)]
\item As we have \begin{equation}\label{eq9}
\nabla \varphi = \beta \nabla \psi e^{\beta \psi},
\end{equation}
we can directly deduce $(a)$ from $(C2)$
\item With reference to $(\ref{eq1})$ and $(\ref{eq9})$, we see that 
$$\nabla \vert \nabla \varphi \vert = \beta \Big( \beta  e^{\beta \psi} \vert \nabla \psi \vert \nabla \psi + e^{\beta \psi} \nabla \vert \nabla \psi \vert \Big) = \beta \vert \nabla \psi \vert \nabla \varphi +  \dfrac{\vert \nabla \varphi \vert}{\vert \nabla \psi \vert}\nabla \vert \nabla \psi \vert,  $$
and hence \begin{equation}\label{eq10}
\nabla \vert \nabla \varphi \vert^2 \cdot  \nabla \varphi  = 2 \Big( \beta \vert \nabla \psi \vert \vert \nabla \varphi \vert^3 + \vert \nabla \varphi \vert^2 \dfrac{\nabla \vert \nabla \psi \vert \cdot \nabla \varphi}{\vert \nabla \psi \vert}  \Big).
\end{equation}
By $(C_2)$, we have $\dfrac{\nabla \vert \nabla \psi \vert \cdot \nabla \varphi}{\vert \nabla \psi \vert} \leqslant C \alpha_0^{-1} \vert \nabla \varphi \vert$, where $C$ is a positive constant depending only on $\psi$. Therefore, $(\ref{eq10})$ yields 
$$\nabla \vert \nabla \varphi \vert^2 \cdot  \nabla \varphi  \geqslant 2 \beta \alpha_0 \big( 1-\dfrac{C}{\beta \alpha_0^2} \big)\vert \nabla \varphi \vert^3 .$$
Finally, by taking $\beta_0 \in [2 \alpha_0^2 , + \infty )$, we get $(b)$.
\item For all $i$ and $j \in \lbrace 1,2,3 \rbrace$, we have 
$$\partial_{x_i} \partial_{x_j} \varphi = \beta e^{\beta \psi} \big( \partial_{x_i} \partial_{x_j} \psi + \beta (\partial_{x_i} \psi ) (\partial_{x_j} \psi ) \big) = \big( \partial_{x_i} \partial_{x_j} \psi + \beta (\partial_{x_i} \psi ) (\partial_{x_j} \psi ) \big) \dfrac{\vert \nabla \varphi \vert}{\vert \nabla \psi \vert}. $$
Thus, using $(C2)$, each $\vert \partial_{x_i} \partial_{x_j} \varphi \vert$, $i,j=1,2,3$, is upper bounded by $ C \beta \vert \nabla \varphi \vert$, where $C$ is a positive constant depending only on $\psi$ and $\alpha_0$. As a consequence, there exists $C' = C'(\psi , \alpha_0) \in (0 , + \infty)$, such that 
$$\vert \mathcal{H}(\varphi) \xi \cdot \xi \vert \leqslant C' \beta \vert \nabla \varphi \vert \vert \xi \vert^2 \quad ; \quad \xi \in \R^3.$$ 
\item We have $$  \Delta \vert \nabla \varphi \vert  = \beta e^{\beta \psi} \Big( \beta^2 \vert \nabla \psi \vert^3  + \beta (\Delta \psi )\vert \nabla \psi \vert + 2 \beta  \nabla \psi \cdot \nabla \vert \nabla \psi \vert + \Delta \vert \nabla \psi \vert \Big) $$
As we have $\vert \nabla \varphi\vert = \beta e^{\beta \psi} \vert \nabla \psi \vert$ and by $(C2)$, we get 
$$\big\vert \Delta \vert \nabla \varphi \vert \big\vert \leqslant C(\psi) \beta^3 e^{\beta \psi} \leqslant C(\psi) \vert \nabla \varphi\vert^3. $$
\item We have $\Delta \varphi  = e^{\beta \psi} \big( \beta^2 \vert \nabla \psi \vert^2 + \beta \Delta \psi  \big)$. By $(a)$ in Lemma \ref{phi}, we get 
$$ \Delta \varphi \geqslant \alpha_0^2 \beta^2 + \beta \Delta \psi \geqslant \alpha_0^2 \beta^2 - \beta \Vert \psi \Vert_{W^{2,\infty}(\mathcal{O}_0)}. $$ For $\beta \geqslant \dfrac{ \Vert \psi \Vert_{W^{2,\infty}(\mathcal{O}_0)}}{\alpha_0^2} $, we get $\Delta \varphi \geqslant 0$.
\end{enumerate}
\end{proof}
Now, we may state the following Carleman's estimate for the operator $-\Delta + q$.
\begin{theorem}
Let $u \in H_0^1(\mathcal{O}_0) \cap H^2(\mathcal{O}_0)$, $M>0$ and let $q \in L^{\infty}(\Omega)$ satisfy $\Vert q  \Vert_{L^{\infty} (\mathcal{O}_0)} \leqslant M$. Then, there exists $\beta_0 \in (0, +\infty)$ such that for every $\beta \geqslant \beta_0$, there is $\lambda_0=\lambda_0(\beta) \in (0, +\infty)$ depending only on $\beta$, $\alpha_0$, $\mathcal{O}_0$, $M$ and $\Gamma_0$, such that the estimate 
\begin{equation}\label{eq2}
\lambda \int_{\mathcal{O}_0} e^{2\lambda\varphi} \big( \lambda^2 \vert u \vert^2 + \vert \nabla u \vert^2 \big) \, dx \\
\leqslant C \Big( \int_{\mathcal{O}_0} e^{2\lambda\varphi} \vert (-\Delta + q)u \vert^2 \, dx + \lambda \int_{\Gamma_0 \times \R} e^{2\lambda\varphi} \big\vert \partial_\nu u \big\vert^2  \, d\sigma_x \Big).
\end{equation}
holds for all $\lambda \geqslant \lambda_0$ and some positive constant $C$ that depends only on $\alpha_0$, $\omega$, $\Gamma_0$, $\beta$ and $\lambda_0$.
\end{theorem}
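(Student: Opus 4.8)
The plan is to exploit the fact that the weight $\varphi=\varphi(x')$ is independent of the axial variable $x_3$. First I would strip off the zeroth-order term: writing $(-\Delta+q)u=(-\Delta)u+qu$ and using $\|q\|_{L^\infty(\mathcal{O}_0)}\leqslant M$, the weighted $L^2$-norm of $qu$ is bounded by $M^2\int_{\mathcal{O}_0}e^{2\lambda\varphi}|u|^2\,dx$, which is $O(\lambda^{-3})$ relative to the leading term $\lambda^3\int_{\mathcal{O}_0}e^{2\lambda\varphi}|u|^2\,dx$ on the left-hand side of \eqref{eq2}. Hence for $\lambda$ large it can be absorbed, and it suffices to prove the estimate for the pure Laplacian $-\Delta$. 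This reduction is not merely cosmetic: since $q$ depends on $x_3$, it would become a convolution operator in the dual variable under the Fourier transform used next, and would thereby destroy the separation of variables.

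Next, since $u\in H_0^1(\mathcal{O}_0)\cap H^2(\mathcal{O}_0)$ and the weight factors through the $x_3$-integration, I would apply the partial Fourier transform in $x_3$, writing $\widehat u(x',\zeta)$. Under it, $-\Delta u$ becomes $(-\Delta'+\zeta^2)\widehat u$ on $\mathcal{W}_0$, the Dirichlet condition becomes $\widehat u(\cdot,\zeta)=0$ on $\partial\mathcal{W}_0$, and — because $\nu=(\nu',0)$ — the normal derivative $\partial_\nu u$ becomes $\partial_{\nu'}\widehat u$. By Parseval, \eqref{eq2} then reduces to the two-dimensional Carleman estimate
\[
\lambda\int_{\mathcal{W}_0}e^{2\lambda\varphi}\big(\lambda^2|\widehat u|^2+\zeta^2|\widehat u|^2+|\nabla'\widehat u|^2\big)\,dx'\leqslant C\Big(\int_{\mathcal{W}_0}e^{2\lambda\varphi}|(-\Delta'+\zeta^2)\widehat u|^2\,dx'+\lambda\int_{\Gamma_0}e^{2\lambda\varphi}|\partial_{\nu'}\widehat u|^2\,d\sigma'\Big),
\]
required uniformly in $\zeta\in\R$; integrating in $\zeta$ and undoing Parseval recovers \eqref{eq2}, the term $\zeta^2|\widehat u|^2$ reproducing $|\partial_{x_3}u|^2$. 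The merit of this route over a direct three-dimensional computation is that the delicate integration by parts in the unbounded variable $x_3$ is replaced by the harmless nonnegative multiplier $\zeta^2$.

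For the two-dimensional estimate I would run the standard conjugation argument on the bounded set $\mathcal{W}_0$. Setting $v=e^{\lambda\varphi}\widehat u$ and $P_\lambda=e^{\lambda\varphi}(-\Delta'+\zeta^2)e^{-\lambda\varphi}$, I split $P_\lambda=P_++P_-$ into its formally self-adjoint and skew-adjoint parts, expand $\|P_\lambda v\|^2=\|P_+v\|^2+\|P_-v\|^2+2\,\mathrm{Re}\langle P_+v,P_-v\rangle$, and compute the cross term as $\langle[P_+,P_-]v,v\rangle$ plus boundary integrals on $\partial\mathcal{W}_0$. Here the properties of Lemma \ref{phi} do the work: property $(b)$ produces the dominant positive bulk term of order $\lambda^3$ (its positivity being exactly $\nabla|\nabla\varphi|^2\cdot\nabla\varphi\geqslant C_0\beta|\nabla\varphi|^3$), property $(c)$ turns the gradient contribution into a positive multiple of $\lambda|\nabla'\varphi|^2|\nabla'v|^2$, and properties $(d)$–$(e)$ bound the remaining error terms; choosing first $\beta\geqslant\beta_0$ and then $\lambda\geqslant\lambda_0(\beta)$ makes the good terms dominate. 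The missing term $\zeta^2|v|^2$ is then moved to the left by combining this commutator estimate with the elementary energy identity coming from $\mathrm{Re}\langle P_\lambda v,v\rangle$, in which $\zeta^2\|v\|^2$ already appears with a favorable sign.

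The step I expect to be the main obstacle is the careful bookkeeping of the boundary integrals on $\partial\mathcal{W}_0$. Since $v$ vanishes there, its tangential derivatives vanish as well, so the surviving boundary term is proportional to $\lambda\int_{\partial\mathcal{W}_0}\partial_{\nu'}\varphi\,|\partial_{\nu'}v|^2\,d\sigma'$. Condition $(C3)$, namely $\partial_{\nu'}\psi\leqslant 0$ on $\partial\mathcal{W}_0\setminus\Gamma_0$ (whence $\partial_{\nu'}\varphi\leqslant 0$ there), is precisely what allows discarding the contribution over the uncontrolled part of the boundary with the correct sign, leaving only the integral over $\Gamma_0\times\R$ — exactly the observation term on the right-hand side. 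The second delicate point is to ensure that every constant produced along the way is independent of $\zeta$, so that the final integration in $\zeta$ is legitimate; this is guaranteed because $\zeta^2\geqslant0$ enters only through self-adjoint, sign-definite contributions and never through the commutator.
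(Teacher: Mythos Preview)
Your proposal is correct but follows a genuinely different route from the paper's argument. The paper does \emph{not} Fourier-transform in $x_3$; instead it runs the conjugation argument directly in three dimensions on $\mathcal{O}_0$. Setting $v=e^{\lambda\varphi}u$, it splits $P_\lambda v=P_\lambda^+v+P_\lambda^-v+Rv$ with $P_\lambda^+v=-\Delta v-\lambda^2|\nabla\varphi|^2v$ (full $3$D Laplacian), $P_\lambda^-v=2\lambda\nabla\varphi\cdot\nabla v$, $Rv=\lambda(\Delta\varphi)v$, and computes the cross term $\langle P_\lambda^+v,P_\lambda^-v\rangle$ by explicit integration by parts in all variables at once. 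The unboundedness in $x_3$ causes no difficulty because $\varphi$ is bounded (it depends only on $x'$) and $u\in H^2(\mathcal{O}_0)$ provides enough decay for the $x_3$-integrations by parts to produce no terms at infinity. The $|\partial_{x_3}u|^2$ contribution then appears automatically inside the full-gradient term $\lambda\int|\nabla\varphi|\,|\nabla v|^2$ that the paper extracts via the auxiliary identity $\int|\nabla\varphi||\nabla v|^2=-\int|\nabla\varphi|v\Delta v+\tfrac12\int\Delta|\nabla\varphi|\,|v|^2$.

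Your approach trades that single $3$D computation for a one-parameter family of $2$D ones. The advantage is conceptual transparency: every integration by parts lives on the bounded set $\mathcal{W}_0$, and the axial variable is reduced to the harmless nonnegative multiplier $\zeta^2$, so there is no need to justify vanishing at $x_3=\pm\infty$. The cost is the extra bookkeeping you flagged---checking uniformity in $\zeta$ and recovering $\lambda\zeta^2\|v\|^2$ from the energy identity $\mathrm{Re}\langle P_\lambda v,v\rangle=\|\nabla'v\|^2+\zeta^2\|v\|^2-\lambda^2\int|\nabla'\varphi|^2|v|^2$---which does work as you describe once combined with the $\lambda^3\|v\|^2$ term from the commutator. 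Note also that your self-adjoint/skew-adjoint split (with $P_-=2\lambda\nabla'\varphi\cdot\nabla'+\lambda\Delta'\varphi$) differs slightly from the paper's, which pulls out $R=\lambda\Delta\varphi$ separately and handles it as a lower-order remainder; your choice is cleaner for the commutator formalism, while the paper's keeps $P_\lambda^-$ as a pure transport term. Both reductions to $-\Delta$ by absorbing $qu$ for large $\lambda$ are identical.
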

\begin{proof}
For the proof, as $ \Vert q \Vert _{L^\infty (\mathcal{O}_0 )} \leqslant M $, we can simply show the following inequality 
\begin{equation}\label{eq3}
\lambda \int_{\mathcal{O}_0} e^{2\lambda\varphi} \big( \lambda^2 \vert u \vert^2 + \vert \nabla u \vert^2 \big) \, dx \\
\leqslant C \Big( \int_{\mathcal{O}_0} e^{2\lambda\varphi} \vert \Delta u \vert^2 \, dx + \lambda \int_{\Gamma_0 \times \R } e^{2\lambda\varphi} \big\vert \partial_\nu u \big\vert^2  \, d\sigma_x \Big).
\end{equation}
Without loss of generality we may assume that $u$ is real valued. We set $v(x)=e^{\lambda\varphi(x)} u(x)$ in such a way that $$\int_{\mathcal{O}_0} e^{2\lambda\varphi} \vert \Delta u \vert^2 \, dx = \int_{\mathcal{O}_0} \vert- e^{\lambda\varphi} \Delta(e^{-\lambda\varphi} v ) \vert^2 \, dx  .$$ We have 
$$ \Delta (e^{-\lambda\varphi} v) = e^{-\lambda\varphi} ( \lambda^2 \vert\nabla\varphi\vert^2 v - 2\lambda \nabla\varphi\nabla v - \lambda v \Delta\varphi +\Delta v).$$
Then, \begin{align*}
P_\lambda v & := - e^{\lambda\varphi} \Delta(e^{-\lambda\varphi} v )= - \Delta v + \lambda v \Delta \varphi + 2 \lambda \nabla \varphi \nabla v - \lambda^2 \vert \nabla \varphi \vert^2 v \\
& = P_\lambda^-v(x)+P_\lambda^+v(x) + Rv(x),
\end{align*}
with $P_\lambda^-v= 2 \lambda \nabla \varphi \cdot \nabla v $, $P_\lambda^+v= - \Delta v - \lambda^2 \vert \nabla \varphi \vert^2 v$ and $ Rv=\lambda v \Delta \varphi  $. Let $\overset{\sim}{P}_\lambda v = P_\lambda - Rv = P_\lambda^-v+P_\lambda^+v$.  
With the previous notations, we have $\Vert \overset{\sim}{P}_\lambda v \Vert^2 = \Vert P_\lambda^+v \Vert_{L^2(\mathcal{O}_0)}^2 + \Vert P_\lambda^-v \Vert_{L^2(\mathcal{O}_0)}^2 + 2 Re\langle P_\lambda^+v , P_\lambda^-v \rangle_{L^2(\mathcal{O}_0)}$. Then, as $v$ is real valued, we compute the term $2 \langle P_\lambda^+v , P_\lambda^-v \rangle_{L^2(\mathcal{O}_0)}$. We have
\begin{align*}
\int_{\mathcal{O}_0} P_\lambda^+v P_\lambda^-v & = - 2 \lambda \int_{\mathcal{O}_0} \Delta v \nabla \varphi \cdot \nabla v \, dx - 2 \lambda^3 \int_{\mathcal{O}_0} \vert \nabla \varphi \vert^2 \nabla \varphi \cdot \nabla v \, v \, dx, \\
& := K_1 + K_2.
\end{align*}
$$K_1 = 2 \lambda \int_{\mathcal{O}_0} \nabla v \cdot \nabla (\nabla \varphi \cdot \nabla v) \, dx - 2 \lambda \int_{\partial \mathcal{O}_0} \partial_\nu v \, \nabla \varphi \cdot \nabla v \, d\sigma_x := J_1 + J_2. $$
However
$$
\nabla v \cdot  \nabla(\nabla\varphi \cdot \nabla v ) = \mathcal{H}(\varphi) (\nabla v , \nabla v ) + \frac{1}{2} \nabla \varphi \cdot \nabla ( \vert \nabla v \vert^2) . $$
Then \begin{align*}
J_1 & = 2 \lambda \int_{\mathcal{O}_0} \mathcal{H}(\varphi) (\nabla v , \nabla v ) \, dx + \lambda \int_{\mathcal{O}_0} \nabla \varphi \cdot \nabla ( \vert \nabla v \vert^2) \, dx \\ 
& = 2 \lambda \int_{\mathcal{O}_0} \mathcal{H}(\varphi) (\nabla v , \nabla v ) \, dx - \lambda \int_{\mathcal{O}_0} \Delta \varphi  \vert \nabla v \vert^2 \, dx + \lambda \int_{\partial \mathcal{O}_0} \nabla \varphi \cdot \nu  \vert \nabla v \vert^2 \, d\sigma_x.
\end{align*} 
As a result, we get 
$$K_1 = 2 \lambda \int_{\mathcal{O}_0} \mathcal{H}(\varphi) (\nabla v , \nabla v ) \, dx - \lambda \int_{\mathcal{O}_0} \Delta \varphi  \vert \nabla v \vert^2 \, dx + \lambda \int_{\partial \mathcal{O}_0} \nabla \varphi \cdot \nu  \vert \nabla v \vert^2 \, d\sigma_x - 2 \lambda \int_{\partial \mathcal{O}_0} \partial_\nu v \, \nabla \varphi \cdot \nabla v \, d\sigma_x. $$
As we have $v=0$ on $\partial \mathcal{O}_0$ and $\nabla v = ( \partial_\nu v ) \nu$ on $\partial \mathcal{O}_0$, we get
$$K_1 = 2 \lambda \int_{\mathcal{O}_0} \mathcal{H}(\varphi) (\nabla v , \nabla v ) \, dx  - \lambda \int_{\mathcal{O}_0} \Delta \varphi  \vert \nabla v \vert^2 \, dx - \lambda \int_{\partial \mathcal{O}_0} \partial_\nu \varphi \vert \partial_\nu v \vert^2 \, d\sigma_x. $$
$$
K_2 = \lambda^3 \int_{\mathcal{O}_0} \vert v \vert^2\big( \vert \nabla \varphi \vert^2 \Delta \varphi + \nabla \varphi \cdot \nabla (\vert \nabla \varphi \vert^2)  \big) \, dx.
$$
Thus, we obtain:
\begin{multline*}
\int_{\mathcal{O}_0} P_\lambda^+v P_\lambda^-v \, dx+ \lambda \int_{\partial \mathcal{O}_0} \partial_\nu \varphi  \vert \partial_\nu v \vert^2 \, d\sigma_x \\ = 2 \lambda \int_{\mathcal{O}_0} \mathcal{H}(\varphi)  (\nabla v , \nabla v ) \, dx - \lambda \int_{\mathcal{O}_0} \Delta \varphi  \vert \nabla v \vert^2 \, dx + \lambda^3 \int_{\mathcal{O}_0} v^2\big( \vert \nabla \varphi \vert^2 \Delta \varphi + \nabla \varphi \cdot \nabla (\vert \nabla \varphi \vert^2)  \big) \, dx.
\end{multline*}
Referring to $(b)$ and $(c)$ in Lemma \ref{phi} and condition (C3) which implies that $ \partial_\nu \varphi (x) \leqslant 0$ for $ x \in \partial \mathcal{O}_0 \backslash (\Gamma_0 \times \R)$, we obtain for all $\beta \in [\beta_0 , + \infty )$, that:
\begin{equation}\label{eq4}
\int_{\mathcal{O}_0} P_\lambda^+v P_\lambda^-v + \lambda \int_{\Gamma_0\times \R} \partial_\nu \varphi \vert \partial_\nu v \vert^2 \, d\sigma_x \geqslant C_0 \lambda^3 \beta \int_{\mathcal{O}_0}\vert \nabla \varphi \vert^3 \vert v\vert^2 \, dx - 2 C_1 \lambda \beta \int_{\mathcal{O}_0} \vert \nabla \varphi \vert \vert \nabla v \vert^2 \, dx + R_0,
\end{equation} 
with $ R_0= \displaystyle\int_{\mathcal{O}_0} \lambda \Delta \varphi \big( \lambda^2 \vert v \vert^2 \vert \nabla \varphi \vert^2 - \vert \nabla v \vert^2  \big) \, dx $. As we have $P_\lambda^+v= - \Delta v - \lambda^2 \vert \nabla \varphi \vert^2 v$, we can write $\Delta v = - P_\lambda^+v - \lambda^2 \vert \nabla \varphi \vert^2 v$, and then
$$\vert \Delta v \vert^2 = ( P_\lambda^+v + \lambda^2 \vert \nabla \varphi \vert^2 v)^2 \leqslant 3 \big( (P_\lambda^+v )^2 + \lambda^4 \vert \nabla \varphi \vert^4 \vert v \vert^2 \big). $$
From $(a)$ in Lemma \ref{phi}, we get for all $\beta \in [\beta_0 , + \infty )$
$$\big( \lambda \vert \nabla \varphi \vert \big)^{-1}\vert \Delta v \vert^2 \leqslant \dfrac{3}{\alpha \lambda} ( P_\lambda^+v )^2 + 3\lambda^3 \vert \nabla \varphi \vert^3 \vert v \vert^2 . $$ This entails that 
\begin{equation}\label{eq5}
\int_{\mathcal{O}_0} \big( \lambda \vert \nabla \varphi \vert \big)^{-1} \vert \Delta v \vert^2 \, dx \leqslant \dfrac{3}{\alpha \lambda}\int_{\mathcal{O}_0} \vert P_\lambda^+v \vert^2 \, dx+ 3\int_{\mathcal{O}_0} (\lambda \vert \nabla \varphi \vert)^3 \vert v \vert^2 \, dx .
\end{equation}
Further, as $v=0$ on $\partial \mathcal{O}_0$, we get
$$\int_{\mathcal{O}_0} \vert \nabla \varphi \vert \vert \nabla v \vert^2 \, dx = - \int_{\mathcal{O}_0} \vert \nabla \varphi \vert v \Delta v dx + \frac{1}{2} \int_{\mathcal{O}_0} \Delta \vert \nabla \varphi \vert \vert v\vert^2 \, dx .$$
This and the estimate 
$$ \beta^{\frac{3}{2}} \lambda \vert \nabla \varphi \vert \vert v \Delta v \vert = \big( (\lambda \vert \nabla \varphi \vert)^{-\frac{1}{2}} \vert \Delta v \vert \big) \big( \beta^{\frac{3}{2}} (\lambda \vert \nabla \varphi \vert)^{\frac{3}{2}} \vert v \vert  \big) \leqslant \dfrac{1}{2} \big( \lambda \vert \nabla \varphi \vert \big)^{-1}\vert \Delta v \vert^2 + \dfrac{\beta^3}{2} \big(\lambda \vert \nabla \varphi \vert\big)^3 \vert v \vert^2  $$ 
lead to 
\begin{equation}\label{eq6}
\beta^{\frac{3}{2}} \lambda \int_{\mathcal{O}_0} \vert \nabla \varphi \vert \vert \nabla v \vert^2 \, dx \leqslant \dfrac{1}{2} \int_{\mathcal{O}_0} \big( \lambda \vert \nabla \varphi \vert \big)^{-1}\vert \Delta v \vert^2 \, dx + \dfrac{\beta^3}{2} \int_{\mathcal{O}_0} \big(\lambda \vert \nabla \varphi \vert\big)^3 \vert v \vert^2  \, dx + \beta^{\frac{3}{2}} \lambda \int_{\mathcal{O}_0} \Delta \vert \nabla \varphi \vert \vert v \vert^2 \, dx .
\end{equation}
From $(\ref{eq5})$ and $(\ref{eq6})$ it follows that
\begin{multline*}
\beta^{\frac{3}{2}} \lambda \int_{\mathcal{O}_0} \vert \nabla \varphi \vert \vert \nabla v \vert^2 \, dx + \dfrac{1}{2} \int_{\mathcal{O}_0} \big( \lambda \vert \nabla \varphi \vert \big)^{-1}\vert \Delta v \vert^2 \, dx \\
\leqslant \dfrac{3}{\alpha \lambda}\int_{{\mathcal{O}_0}} \vert P_\lambda^+v \vert^2 \, dx + \big( 3 + \dfrac{\beta^3}{2} \big) \int_{{\mathcal{O}_0}} (\lambda \vert \nabla \varphi \vert)^3 \vert v \vert^2 \, dx + R_1,
\end{multline*}
where $R_1 = \dfrac{\beta^{\frac{3}{2}} }{2}\lambda \displaystyle \int_{\mathcal{O}_0} \Delta \vert \nabla \varphi \vert \vert v \vert^2 \, dx$. Therefore, upon substituting $\text{max}(\beta_0 , 6^{\frac{1}{3}})$ for $\beta_0$, we obtain for all $\beta \in [\beta_0 , + \infty )$ that
$$
\beta^{\frac{3}{2}} \lambda \int_{\mathcal{O}_0} \vert \nabla \varphi \vert \vert \nabla v \vert^2 \, dx + \dfrac{1}{2} \int_{\mathcal{O}_0} \big( \lambda \vert \nabla \varphi \vert \big)^{-1} \vert \Delta v \vert^2 \, dx 
\leqslant \dfrac{3}{\alpha \lambda}\int_{{\mathcal{O}_0}} \vert P_\lambda^+v \vert^2 \, dx + \beta^3 \int_{{\mathcal{O}_0}} (\lambda \vert \nabla \varphi \vert)^3 \vert v \vert^2 \, dx + R_1.
$$
Putting this together with $(\ref{eq4})$, we find
\begin{multline}\label{eq7}
\dfrac{3}{\alpha \lambda}\int_{{\mathcal{O}_0}} \vert P_\lambda^+v \vert^2 \, dx + \dfrac{2}{C_0} \int_{{\mathcal{O}_0}} P_\lambda^+v P_\lambda^-v \, dx + \dfrac{2\lambda}{C_0} \int_{\Gamma_0 \times \R}  \partial_\nu \varphi \vert \partial_\nu v \vert^2 \, d\sigma_x \\
\geqslant \big( \beta^{\frac{1}{2}} - \dfrac{4C_1}{C_0}  \big) \beta\lambda \int_{\mathcal{O}_0} \vert \nabla \varphi \vert \vert \nabla v \vert^2 \, dx + \beta \int_{{\mathcal{O}_0}} ( \lambda \vert \nabla \varphi \vert)^3 \vert v \vert^2 \, dx 
+ \dfrac{1}{2} \int_{\mathcal{O}_0} \big( \lambda \vert \nabla \varphi \vert \big)^{-1}\vert \Delta v \vert^2 \, dx   + R_2,
\end{multline}
with $R_2 = \dfrac{2R_0}{C_0} - R_1$.
Then enlarging $\beta_0$ in such a way that $\beta_0^{\frac{1}{2}} - \dfrac{4C_1}{C_0} $ is lower bounded by $C_2 >0$, we infer from $(\ref{eq7})$ that for all $\beta \in [\beta_0 , + \infty )$
\begin{multline*}
\dfrac{3}{\alpha \lambda}\int_{{\mathcal{O}_0}} ( P_\lambda^+v )^2 \, dx + \dfrac{2}{C_0} \int_{{\mathcal{O}_0}} P_\lambda^+v P_\lambda^-v \, dx + \dfrac{2\lambda}{C_0} \int_{\Gamma_0 \times \R} \vert \partial_\nu \varphi \vert \vert \partial_\nu v \vert^2 \, d\sigma_x \\
\geqslant C_2 \lambda \int_{\mathcal{O}_0} \vert \nabla \varphi \vert \vert \nabla v \vert^2 \, dx + \beta \int_{{\mathcal{O}_0}} ( \lambda \vert \nabla \varphi \vert)^3 \vert v \vert^2 \, dx 
+ \dfrac{1}{2} \int_{\mathcal{O}_0} \big( \lambda \vert \nabla \varphi \vert \big)^{-1}\vert \Delta v\vert^2 \, dx   + R_2.
\end{multline*}
Further, since $P_\lambda^-v= 2 \lambda \nabla \varphi \nabla v $, by $(a)$ in Lemma \ref{phi}, we get 
\begin{multline*}
\dfrac{3}{\alpha \lambda}\int_{{\mathcal{O}_0}} \vert P_\lambda^+v \vert^2 \, dx + \dfrac{2}{C_0} \int_{{\mathcal{O}_0}} P_\lambda^+v P_\lambda^-v \, dx +\dfrac{1}{4 \alpha^2 \lambda^2} \int_{{\mathcal{O}_0}} \vert P_\lambda^-v \vert^2 \, dx  +\dfrac{2\lambda}{C_0} \int_{\Gamma_0} \partial_\nu \varphi \vert \partial_\nu v \vert^2 \, d\sigma_x \\
\geqslant C_2 \lambda \int_{\mathcal{O}_0} \vert \nabla \varphi \vert \vert \nabla v \vert^2 \, dx + \int_{\mathcal{O}_0} \vert \nabla v \vert^2 \, dx+ \beta \int_{{\mathcal{O}_0}} ( \lambda \vert \nabla \varphi \vert)^3 \vert v \vert^2 \, dx 
+ \dfrac{1}{2} \int_{\mathcal{O}_0} \big( \lambda \vert \nabla \varphi \vert \big)^{-1} \vert \Delta v \vert^2 \, dx   + R_2.
\end{multline*}
Therefore, bearing in mind that $\overset{\sim}{P}_\lambda =  P_\lambda^+ +  P_\lambda^-$, we get that 
\begin{multline*}
\dfrac{1}{C_0} \Big( \int_{{\mathcal{O}_0}} \vert \overset{\sim}{P}_\lambda v \vert^2 \, dx  + 2 \lambda \int_{\Gamma_0}  \partial_\nu \varphi  \vert \partial_\nu v \vert^2 \, d\sigma_x \Big) \\
\geqslant \dfrac{1}{2}  \int_{\mathcal{O}_0} \big( \lambda \vert \nabla \varphi \vert \big)^{-1}\vert \Delta v \vert^2 \, dx +C_2 \lambda \int_{\mathcal{O}_0} \vert \nabla \varphi \vert \vert \nabla v \vert^2 \, dx + \beta \int_{{\mathcal{O}_0}} ( \lambda \vert \nabla \varphi \vert)^3 \vert v \vert^2 \, dx  + R_2,
\end{multline*}
provided $\lambda \in \Big[ \dfrac{4C_0}{\alpha} , + \infty \Big) $. We have 
$$
R_2  = \dfrac{2}{C_0} R_0 - R_1 = \dfrac{2}{C_0} \int_{\mathcal{O}_0} \lambda \Delta \varphi \big( \lambda^2 \vert v \vert^2 \vert \nabla \varphi \vert^2 - \vert \nabla v \vert^2  \big) \, dx - \dfrac{\beta^{\frac{3}{2}} }{2}\lambda \displaystyle \int_{\mathcal{O}_0} \Delta \vert \nabla \varphi \vert \vert v \vert^2 \, dx .
$$
By $(d)$ and $(e)$ in Lemma \ref{phi} and for $\beta >1$ sufficiently large, we get $\vert R_2 \vert \leqslant C_2  \displaystyle\int_{{\mathcal{O}_0}} ( \lambda \vert \nabla \varphi \vert)^3 v^2 \, dx $. Then 
\begin{multline}\label{eq8}
\dfrac{1}{C_0} \Big( \int_{{\mathcal{O}_0}} \vert \overset{\sim}{P}_\lambda v \vert^2 \, dx  + 2 \lambda \int_{\Gamma_0 \times \R} \partial_\nu \varphi  \vert \partial_\nu v \vert^2 \, d\sigma_x \Big) \\
\geqslant \dfrac{1}{2}  \int_{\mathcal{O}_0} \big( \lambda \vert \nabla \varphi \vert \big)^{-1}\vert \Delta v \vert^2 \, dx +C_2 \lambda \int_{\mathcal{O}_0} \vert \nabla \varphi \vert \vert \nabla v \vert^2 \, dx + C_2 \beta \int_{{\mathcal{O}_0}} ( \lambda \vert \nabla \varphi \vert)^3 \vert v \vert^2 \, dx .
\end{multline}
Next, since $v =  e^{\lambda\varphi} u$, we have $ e^{2\lambda\varphi}  \vert \nabla u \vert^2 \leqslant 2 \big(  \vert \nabla v \vert^2 + \lambda^2 \vert \nabla \varphi \vert^2 \vert v \vert^2  \big)$.
So it follows from $(\ref{eq8})$ and point $(a)$ in Lemma \ref{phi} that
\begin{multline*}
\dfrac{1}{C_0} \Big( \int_{{\mathcal{O}_0}} \vert \overset{\sim}{P}_\lambda v \vert^2 \, dx  + 2 \lambda \int_{\Gamma_0 \times \R} \partial_\nu \varphi \vert \partial_\nu v \vert^2 \, d\sigma_x \Big) \\
\geqslant \dfrac{1}{2}  \int_{\mathcal{O}_0} \big( \lambda \vert \nabla \varphi \vert \big)^{-1}\vert\Delta v \vert^2 \, dx +\dfrac{C_2 \lambda}{2} \int_{\mathcal{O}_0} e^{2\lambda\varphi} \vert  \nabla \varphi \vert \vert \nabla u \vert^2 \, dx + \lambda^3 C_2 (\beta - \alpha^{-1}) \int_{{\mathcal{O}_0}} ( \vert \nabla \varphi \vert)^3 \vert v \vert^2 \, dx .
\end{multline*}
Thus, we get upon possibly substituting $C_2( \alpha^{-1} +1)$ for $\beta_0$ that 
\begin{multline*}
\dfrac{1}{C_0} \Big( \int_{{\mathcal{O}_0}} \vert \overset{\sim}{P}_\lambda v \vert^2 \, dx  + 2 \lambda \int_{\Gamma_0 \times \R}  \partial_\nu \varphi  \vert \partial_\nu v \vert^2 \, d\sigma_x \Big) \\
\geqslant \dfrac{1}{2}  \int_{\mathcal{O}_0} \big( \lambda \vert \nabla \varphi \vert \big)^{-1}\vert\Delta v\vert^2 \, dx +\dfrac{C_2 \lambda}{2} \int_{\mathcal{O}_0} e^{2\lambda\varphi} \vert  \nabla \varphi \vert \vert \nabla u \vert^2 \, dx + \lambda^3 \int_{{\mathcal{O}_0}} ( \vert \nabla \varphi \vert)^3 \vert v \vert^2 \, dx .
\end{multline*}
Further, due to $(\ref{eq1})$ and $(C2)$, the estimate $\vert \partial_\nu \varphi \vert \leqslant C_4$ holds true in ${\mathcal{O}_0}$ with $C_4$ depending only on $\beta$, $\psi$ and $\alpha_0$.
Therefore, there exists a positive constant $C_5$ that depend only on $\beta$, $\psi$ and $\alpha_0$ such that 
$$
\int_{{\mathcal{O}_0}} \vert \overset{\sim}{P}_\lambda v \vert^2 \, dx  +  \lambda \int_{\Gamma_0 \times \R}  \vert \partial_\nu v \vert^2 \, d\sigma_x 
\geqslant C_5 \Big( \int_{\mathcal{O}_0}  \lambda^{-1}\vert \Delta v \vert^2 \, dx +\lambda \int_{\mathcal{O}_0} e^{2\lambda\varphi}  \vert \nabla u \vert^2 \, dx + \lambda^3 \int_{{\mathcal{O}_0}} \vert v \vert^2 \, dx \Big) .
$$
Finally, the Carleman's estimate follows immediately from this upon remembering that $v =  e^{\lambda\varphi} u$, $v = 0$ on $\partial {\mathcal{O}_0}$, $\vert P_\lambda v \vert^2 = e^{2\lambda\varphi}\vert \Delta u\vert^2$ and $\displaystyle\int_{\mathcal{O}_0} \vert R v \vert^2 \, dx \leqslant C \lambda^2 \int_{\mathcal{O}_0} \vert  v \vert^2 \, dx$.
\end{proof}
\subsection{Weak unique continuation property}
Armed with the Carleman's estimate that has just been proven, we can return now to the proof of the weak unique continuation property which is a standard and important tool for the proof of the stability estimate (see \cite{3,29}).  
\begin{proof}[Proof of Lemma \ref{UCP}]
Let $\psi_0$ be the function defined in Lemma A.1.
Since $\psi_0(x')>0$ for all $x' \in \mathcal{W}_0$, there exists a constant $\kappa >0$ such that
\begin{equation} \label{eq13}
\psi_0(x') \geqslant 2 \kappa; \quad x' \in \mathcal{W}_2 \backslash \mathcal{W}_3  .
\end{equation} 
Moreover, as $\psi_0(x')=0$, $x'\in \Gamma^\sharp $, there exist $\mathcal{W}^\sharp $ a small neighborhood of $\Gamma^\sharp $ such that 
\begin{equation} \label{eq14}
\psi_0(x') \leqslant \kappa; \quad  x' \in \mathcal{W}^\sharp ,  \quad \mathcal{W}^\sharp \cap \overline{\mathcal{W}}_1 = \varnothing.
\end{equation}
Let $\overset{\sim}{\mathcal{W}}^\sharp \subset \mathcal{W}^\sharp$ be an arbitrary neighborhood of $\Gamma^\sharp $. To apply $(\ref{eq2})$, it is necessary to introduce a function $\Theta$ satisfying $0\leqslant \Theta \leqslant 1$, $\Theta \in \mathcal{C}^\infty (\R^2)$ and 
\begin{equation}\label{eq15}
\Theta(x')=\left\lbrace
\begin{array}{ll}
1 & \mbox{in $\mathcal{W}_0 \backslash \mathcal{W}^\sharp$,}\\
0 & \mbox{in $\overset{\sim}{\mathcal{W}}^\sharp$.}
\end{array}
\right.
\end{equation}
Let $w$ be a solution to $(\ref{eq11})$. Setting $$w_1(x',x_3)= \Theta(x')w(x',x_3), \quad x' \in \omega, \, x_3 \in \R,$$ 
%and we note that $F$ is supported in $\Omega \backslash \mathcal{O}_0$. 
we get
$$\left\lbrace
\begin{array}{l}
\text{$(-\Delta + q_1)w_1(x) = \Theta(x)F(x)+Q_1(x,D)w \quad \quad \quad \quad $ in $ \mathcal{O}_0$, } \\
\text{$w_1=0 \quad \quad \quad  \quad \quad \,\quad \quad \quad \quad \quad \quad \quad \quad  \quad \quad \quad \quad \quad  \quad \,  $ on $ \partial \mathcal{O}_0$, } 
\end{array}\right.$$
where $Q_1(x,D)$ is a first order operator supported in $ \overline{\mathcal{W}^\sharp} \backslash \overset{\sim}{\mathcal{W}}^\sharp$ and given by
$$ Q_1(x,D)w  = [\Delta',\Theta]w.$$ 
By applying Carleman estimate $(\ref{eq2})$ to $w_1$, we obtain
\begin{multline}\label{eq16}
\lambda \int_{\mathcal{O}_0} e^{2\lambda\varphi} \big( \lambda^2 \vert w_1 \vert^2 + \vert \nabla w_1 \vert^2 \big) \, dx \\
\leqslant C \Big( \int_{\mathcal{O}_0} e^{2\lambda\varphi} \Big( \big\vert Q_1(x,D)w \big\vert^2 + \big\vert F(x) \big\vert^2 \Big) \, dx + \lambda \int_{\Gamma_0 \times \R} \big\vert \partial_\nu w_1 \big\vert^2 e^{2\lambda\varphi} \, d\sigma_{x} \Big).
\end{multline}
Let $ \mathcal{O}^\sharp = \mathcal{W}^\sharp \times \R$ and $\overset{\sim}{\mathcal{O}}^\sharp = \overset{\sim}{\mathcal{W}}^\sharp \times \R $. Using the fact that $Q_1(x,D)$ is a first order operator supported in $ \overline{\mathcal{O}^\sharp} \backslash \overset{\sim}{\mathcal{O}}^\sharp$ and by $(\ref{eq14})$, we get
\begin{align*}
\int_{\mathcal{O}_0} e^{2\lambda\varphi} \big\vert Q_1(x,D)w \big\vert^2 \, dx & \leqslant \int_{\mathcal{O}_0} e^{2\lambda e^{\beta\psi(x)}} \big\vert Q_1(x,D)w \big\vert^2 \, dx \\
& \leqslant e^{2\lambda e^{\beta\kappa}}  \int_{\mathcal{O}^\sharp \backslash \overset{\sim}{\mathcal{O}}^\sharp} \big\vert Q_1(x,D)w \big\vert^2 \, dx \\
& \leqslant C e^{2\lambda e^{\beta\kappa}}  \int_{\mathcal{O}^\sharp \backslash \overset{\sim}{\mathcal{O}}^\sharp} \big( \vert w \vert^2 + \vert \nabla w \vert^2 \big)  \, dx.
\end{align*}
On the other hand, by using the definition of $\Theta$ given by $(\ref{eq15})$, the estimate $(\ref{eq16})$ becomes:
\begin{multline*}
\lambda \int_{\mathcal{O}_0 \backslash \overset{\sim}{\mathcal{O}}^\sharp} e^{2\lambda\varphi}  \big( \lambda^2 \vert w \vert^2 + \vert \nabla w \vert^2 \big)  \, dx \leqslant  C \Big( e^{2\lambda e^{\beta\kappa}} \int_{\mathcal{O}^\sharp \backslash \overset{\sim}{\mathcal{O}}^\sharp} \big( \vert w \vert^2 + \vert \nabla w \vert^2 \big)  \, dx \\
+ \int_{\mathcal{O}_0} e^{2\lambda\varphi} \big\vert F(x) \big\vert^2 \, dx + \lambda \int_{\Gamma_0 \times \R} \big\vert \partial_{\nu} w \big\vert^2 e^{2\lambda\varphi} \, d\sigma_{x} \Big).
\end{multline*}
Moreover, by the fact that $\mathcal{O}_2 \backslash \mathcal{O}_3 \subset \mathcal{O}_0 \backslash \overset{\sim}{\mathcal{O}}^\sharp$ and by $(\ref{eq13})$, we easily obtain that:
\begin{multline*}
e^{2\lambda e^{2\beta\kappa}} \lambda \int_{\mathcal{O}_2 \backslash \mathcal{O}_3 }  \big( \lambda^2 \vert w \vert^2 + \vert \nabla w \vert^2 \big)  \, dx \leqslant  C \Big( e^{2\lambda e^{\beta\kappa}} \int_{\mathcal{O}^\sharp \backslash \overset{\sim}{\mathcal{O}}^\sharp} \big( \vert w \vert^2 + \vert \nabla w \vert^2 \big)  \, dx \\
+ \int_{\mathcal{O}_0} e^{2\lambda\varphi} \big\vert F(x) \big\vert^2 \, dx + \lambda \int_{\Gamma_0 \times \R} \big\vert \partial_{\nu} w \big\vert^2 e^{2\lambda\varphi} \, d\sigma_{x} \Big).
\end{multline*}
Thus, we have:
\begin{multline*}
\lambda \int_{\mathcal{O}_2 \backslash \mathcal{O}_3 }  \big( \lambda^2 \vert w \vert^2 + \vert \nabla w \vert^2 \big)  \, dx \leqslant  C \Big( e^{-2\lambda ( e^{2\beta\kappa} - e^{\beta\kappa})} \int_{\mathcal{O}^\sharp \backslash \overset{\sim}{\mathcal{O}}^\sharp} \big( \vert w \vert^2 + \vert \nabla w \vert^2 \big)  \, dx \\
+e^{2\lambda( e^{2\beta\Vert\psi_0\Vert_\infty} - e^{2\beta\kappa})} \Big( \int_{\mathcal{O}_0}  \big\vert F(x) \big\vert^2 \, dx + \lambda \int_{\Gamma_0 \times \R} \big\vert \partial_{\nu} w \big\vert^2  \, d\sigma_{x} \Big) \Big).
\end{multline*}
Let $ \alpha_1= ( e^{2\beta\kappa} - e^{\beta\kappa}) \, >0$ and $\alpha_2=( e^{2\beta\Vert\psi_0\Vert_\infty} - e^{2\beta\kappa}) \, >0$. We conclude that for any $\lambda>\lambda^*$, we have:
\begin{multline*}
\lambda \int_{\mathcal{O}_2 \backslash \mathcal{O}_3 }  \big( \lambda^2 \vert w \vert^2 + \vert \nabla w \vert^2 \big)  \, dx \leqslant  C \Big( e^{-2\lambda \alpha_1} \int_{\mathcal{O}^\sharp \backslash \overset{\sim}{\mathcal{O}}^\sharp} \big( \vert w \vert^2 + \vert \nabla w \vert^2 \big)  \, dx \\
+e^{2\lambda \alpha_2} \Big( \int_{\mathcal{O}_0}  \big\vert F(x) \big\vert^2 \, dx + \int_{\Gamma_0 \times \R} \big\vert \partial_{\nu} w \big\vert^2 \, d\sigma_{x} \Big) \Big).
\end{multline*}
Then, we have
$$ \Vert w \Vert_{H^1(\mathcal{O}_2 \backslash \mathcal{O}_3)}^2 \leqslant C \Big( e^{-2\lambda \alpha_1} \Vert w \Vert_{H^1(\Omega)}^2 + e^{2\lambda \alpha_2} \Big( \Vert F \Vert_{L^2(\mathcal{O}_0)}^2  + \big\Vert \partial_{\nu} w \big\Vert_{L^2(\Gamma_0 \times \R)}^2 \Big) \Big) $$
which completes the demonstration.

\end{proof}
\section*{Acknowledgments}
The author would like to thank Mr Mourad BELLASSOUED and Mr Yavar KIAN for their valuable and constructive suggestions during the planning and development of this research work and for the huge time spent to read carefully this manuscript. \\
The author would like also to thank the editor and the referees for the careful reading of the paper and for their insightful comments on which substantially helped improving the quality of the paper.

\end{document}